\newtheorem{thm}{Theorem}[section]
\newtheorem{prop}[thm]{Proposition} 
\newtheorem{coro}[thm]{Corollary}
\newtheorem{lemma}[thm]{Lemma}
\newtheorem{claim}[thm]{Claim}
\newtheorem*{questions*}{Questions}
\theoremstyle{definition}
\newtheorem{defn}[thm]{Definition}
\theoremstyle{remark}
\newtheorem{remark}[thm]{Remark}
	\def\command@factory#1{%
		\expandafter\def\csname cal#1\endcsname{\mathcal{#1}}
		\expandafter\def\csname frak#1\endcsname{\mathfrak{#1}}
		\expandafter\def\csname scr#1\endcsname{\mathscr{#1}}
		\expandafter\def\csname bb#1\endcsname{\mathbb{#1}}
		\expandafter\def\csname rm#1\endcsname{\mathrm{#1}}
		\expandafter\def\csname bf#1\endcsname{\mathbf{#1}}
	}
\newcommand{\ad}{{\rm ad}}
\newcommand{\aut}[1]{{\mathrm{Aut}\left(#1\right)}}
\newcommand{\Aut}[1]{{\mathrm{Aut}\left(#1\right)}}
\newcommand{\Out}[1]{{\mathrm{Out}\left(#1\right)}}
\newcommand{\Comm}[1]{{\mathrm{Comm}\left(#1\right)}}
\DeclareMathOperator {\GL}{GL}
\DeclareMathOperator {\PSL}{PSL}
\DeclareMathOperator {\Homeo}{Homeo}
\begin{document}

\author{Fran\c{c}ois Dahmani}
\address{Univ. Grenoble Alpes, CNRS, IF, 38000 Grenoble, France}
\email{francois.dahmani@univ-grenoble-alpes.fr}
\urladdr{https://www-fourier.univ-grenoble-alpes.fr/~dahmani}

\author{Mahan Mj}

\address{School of Mathematics, Tata Institute of Fundamental Research, Mumbai-40005, India}
\email{mahan@math.tifr.res.in, mahan.mj@gmail.com}
\urladdr{http://www.math.tifr.res.in/~mahan}
\title{Class number for pseudo-Anosovs} 

\thanks{Both authors warmly thank the Institut Henri Poincar\'e (IHP)  for hosting the program "Groups Acting on Fractals" in Spring 2022, the 
	  {Institut des Hautes Etudes Scientifiques} for  the first author's visiting Carmin position, and the Centre de Mathematiques Laurent Schwartz.  This work was supported by LabEx CARMIN, ANR-10-LABX-59-01. FD  is supported by ANR-22-CE40-0004 GoFR.
	MM is supported by  the Department of Atomic Energy, Government of India, under project no.12-R\&D-TFR-5.01-0500,  by an endowment of the Infosys Foundation.
	and by   a DST JC Bose Fellowship. } 


\begin{abstract}
    Given two automorphisms of a group $G$, one is interested in knowing whether they are conjugate in the automorphism group of $G$, or in the abstract commensurator of $G$, and how these two properties may differ. When $G$ is the fundamental group of a closed orientable surface, we present a uniform finiteness theorem for the class of pseudo-Anosov automorphisms.  We present an explicit example of a commensurably conjugate pair of pseudo-Anosov automorphisms of a genus $3$ surface, that are not conjugate in the mapping class group, and we also show that infinitely many pairwise non-commuting pseud-Anosov automorphisms have class number equal to one. In the appendix, we briefly survey the Latimer-MacDuffee theorem that addresses the case of automorphisms of $\mathbb{Z}^n$, with a point of view that is suited to an analogy with surface group automorphisms. 
\end{abstract}

\maketitle

\section{Introduction: commensurated conjugacy}

    When $G$ is a group (or any structure), a natural problem is to classify the conjugacy classes  in its automorphism group $\Aut G$.  A familiar example is  $G=\bbZ^n$, for which  $\Aut G \simeq \GL_n(\bbZ)$.
    
    The \emph{abstract commensurator}  $\Comm G$ of $G$  is the group of equivalence classes of isomorphisms between two finite index subgroups of $G$, where two such automorphisms $\phi_1, \phi_2$ are declared to be equivalent if they agree  on further finite index subgroups.

    There is a natural homomorphism  from $\Aut G$  to $\Comm G$. It need not be injective, but in many cases of interest it is. In general, $\Comm G$ is much larger than the image of $\Aut G$. In our familiar example, $\Comm{\bbZ^n} \simeq \GL_n(\bbQ)$.  Of course, with linear algebra, conjugation in $\GL_n(\bbQ)$ is more easily understood than in $\GL_n(\bbZ)$. {(See \cite{studenmund} for instance for more
    details on commensurators in solvable Lie groups.)}

    For a group $G$, and $\phi \in \Aut G$, {let $[\phi]$ denote the image of $\phi$ in $\Comm G$. We define the \emph{commensurated-conjugacy class}           of $\phi$
    to be the set of automorphisms of $G$ that are conjugate to $[\phi]$} in $\Comm G$, and we say that these automorphisms are commensurably conjugate to $\phi$.  Any commensurated-conjugacy class is a union of  $\Aut G$-conjugacy classes.

    \begin{questions*} When are the commensurated-conjugacy classes strictly larger than $\Aut G$-conjugacy classes?   When do they consist of finitely many  $\Aut G$-conjugacy classes?
    \end{questions*}

    We call the number of $\Aut G$-conjugacy classes in the commensurated-conjugacy class of $\phi$,  the \emph{class number} of $\phi$. This terminology is  suggested and supported by our familiar example, as we explain now. 

   Let $G = \mathbb{Z}^n$, so that $\Aut G = \GL_n(\bbZ)$, and $\Comm G = \GL_n (\bbQ)$.  Choose $\phi \in \GL_n(\bbZ)$ such that its characteristic polynomial $\chi \in \bbZ[X]$ is irreducible over $\bbZ$. All  $\psi\in \GL_n(\bbZ)$ with the same characteristic polynomial as $\phi$ are  conjugate to $\phi$ in $\GL_n(\bbC)$, hence in $\GL_n(\bbQ)$ as well. Thus, they are commensurably conjugate.   The Latimer-MacDuffee theorem \cite{LMD, T} states that the $\GL_n(\bbZ)$-conjugacy classes of such elements are in correspondence with the ideal classes of the ring $\bbZ[X]/(\chi)$. Their cardinality, which is the order of the ideal class monoid of the ring,  is called the  \emph{class number} of the ring.  In other words, the class number of $\phi$ in our sense, is the class number of the ring $\bbZ[X]/(\chi)$.    
   It is  finite for all $\phi \in \GL_n(\bbZ)$, larger than the class number of the  ring of integers $\calO_{\bbQ[X]/(\chi)}$,  and it is greater than $2$ if and only if the ring is not a principal ideal domain.

    It is worth mentioning that  finiteness of class numbers is not true in general. A countable group in which the class number of an automorphism is not finite is the infinite direct product of copies of $\bbZ/3$: the automorphisms that are identity on finitely many copies, and flips on all the other copies, are all commensurably equal, but not conjugate. 
    
\bigskip

    We turn our attention to non-abelian counterparts of the groups  $\bbZ^n$. There are two celebrated and important such classes: finite rank non-abelian free groups, and fundamental groups of surfaces of higher genus. In this work we will consider the {latter}.   
    
    Consider a closed orientable surface $\Sigma$ of genus $g\geq 2$. The extended mapping class group of $\Sigma$ is the homeomorphism group of $\Sigma$ quotiented by its component of identity $\Homeo(\Sigma)/\Homeo_0(\Sigma)$.  
    By the Dehn-Nielsen-Baer theorem 
    {\cite[Chapter 8]{fm-book}}, it is isomorphic to the  outer automorphism group of $\pi_1(\Sigma)$.   However in order to make sense of commensuration, one must work at the level of automorphisms  rather than outer automorphisms -- in particular  one needs a base point $p$ in $\Sigma$. We have:

\[ \Aut{\pi_1(\Sigma,p)} \twoheadrightarrow \Out{\pi_1(\Sigma)} \xrightarrow{\sim} MCG^*(\Sigma) \]

 The   Nielsen-Thurston classification of mapping classes of $\Sigma$ distinguishes finite order  mapping classes,  reducible mapping classes, and { pseudo-Anosov mapping classes. 
 It is appropriate to regard	pseudo-Anosov mapping classes as  the correct placeholders for truly irreducible mapping classes.} 
 
 \begin{defn}\label{def-pa}{
 We will say that an  automorphism is a \emph{pseudo-Anosov}  if its image in the mapping class group under the above surjective homomorphism is a pseudo-Anosov mapping class.}
 \end{defn}
 We establish the following uniform finiteness  for class numbers of pseudo-Anosov automorphisms.

    \begin{thm}\label{theo;intro}
        If $\phi$ is a pseudo-Anosov automorphism of the fundamental group of a closed orientable surface of genus $g\geq 2$, its class number is finite, bounded above by $((168(g-1))!)^{2g}$. 
    \end{thm}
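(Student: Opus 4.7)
My plan is to associate to each commensurable conjugate $\psi$ of $\phi$ a permutation representation $\rho_\psi\colon G \to S_N$, where $G = \pi_1(\Sigma,p)$ and $N = 168(g-1)$, to show that $\rho_\psi$ is a complete invariant of the $\Aut G$-conjugacy class of $\psi$ inside the commensurable conjugacy class of $\phi$, and then to count the possibilities.

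\emph{Geometric realization of the commensuration.} Given $\psi = \alpha \phi \alpha^{-1}$ with $\alpha \in \Comm G$, I would choose a characteristic finite-index subgroup $K \leq G$ (so that $\phi$ and $\psi$ both preserve $K$), small enough that the commensurator element $\alpha$ restricts to a literal group isomorphism $\alpha\colon K \to \alpha(K) \leq G$ conjugating $\phi|_K$ to $\psi|_{\alpha(K)}$. By the Dehn--Nielsen--Baer theorem applied to the closed surface groups $K, \alpha(K)$, this $\alpha$ is induced by a homeomorphism $f\colon \Sigma_K \to \Sigma_{\alpha(K)}$ between the corresponding finite covers of $\Sigma$, intertwining the natural lifts of $\phi$ and $\psi$.

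\emph{Finite invariant from the centralizer of $\phi$.} By Kerckhoff's Nielsen realization theorem, the finite-order part of the centralizer of $\phi$ in $MCG^{*}(\Sigma)$ is realized as a group of isometries of some hyperbolic metric on $\Sigma$, and hence has order at most $168(g-1)$ by the Hurwitz bound (allowing orientation-reversing symmetries). From the conjugation datum $(K,\alpha)$ I would then construct a canonical homomorphism $\rho_\psi\colon G \to S_N$, with $N \leq 168(g-1)$, encoding the action of $G$ on a finite set built from this finite centralizer and the commensuration $\alpha$. The crucial point would be to establish that if two commensurable conjugates $\psi_1,\psi_2$ of $\phi$ give rise to $\Aut G$-equivalent homomorphisms (under the natural action of $\Aut G$ on $\mathrm{Hom}(G,S_N)$), then $\psi_1$ and $\psi_2$ are $\Aut G$-conjugate.

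\emph{Counting and main obstacle.} Since $G = \pi_1(\Sigma,p)$ is generated by $2g$ elements, the number of homomorphisms $G \to S_N$ is at most $|S_N|^{2g} = (N!)^{2g} \leq ((168(g-1))!)^{2g}$, which is the desired bound on the class number of $\phi$. The main obstacle is the construction and analysis of $\rho_\psi$ in the second step: defining a canonical finite permutation representation from the data $(K, \alpha)$ that is invariant under the $\Aut G$-action and complete as a classifier of the $\Aut G$-conjugacy class of $\psi$. This invariant is the surface-group analogue of the ideal-class datum in the Latimer--MacDuffee theorem sketched in the appendix, and pinning it down faithfully is the hard part of the proof.
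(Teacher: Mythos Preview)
Your counting step at the end is exactly the one the paper uses, but the substantive middle of your argument is missing, and the mechanism you propose for filling it does not match what actually works.

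The paper does not stay on the surface. It passes to the mapping torus $\Gamma_\phi = G\rtimes_\phi\langle t_\phi\rangle$, hyperbolizes it as a uniform lattice in $PSL_2(\bbC)$ via Thurston, and then, crucially, invokes the discreteness-of-commensurators theorem of Leininger--Long--Reid and Mj: since $\rho_\phi(G)$ is a finitely generated Kleinian group of infinite covolume with limit set the whole sphere, its $PSL_2(\bbC)$-commensurator is a uniform lattice $\Lambda_\phi$. Mostow rigidity then forces every commensurable conjugate $\psi$ to give a copy $\rho_\psi(G)\subset\Lambda_\phi$, and the union of all such copies generates a single hyperbolic $2$-orbifold group $F_0$ normalized by $\rho_\phi(t_\phi)$ and containing $\rho_\phi(G)$ with finite index. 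The Hurwitz bound enters here, as a lower bound on the area of the orbifold underlying $F_0$, giving $[F_0:\rho_\psi(G)]\leq 168(g-1)$; the $2g$ in the exponent is the rank bound for $F_0$, not for $G$. The permutation representation you are looking for is the action of $F_0$ on the cosets of $\rho_\psi(G)$.

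Your route via the finite part of the centralizer of $\phi$ in $MCG^*(\Sigma)$ does not produce this ambient orbifold group, and there is no evident way to build from that centralizer a finite set on which $G$ acts so as to separate $\Aut G$-conjugacy classes of commensurable conjugates. You correctly identify this construction as ``the hard part of the proof,'' but the paper's point is precisely that one needs three-dimensional rigidity (Mostow plus discreteness of commensurators) to pin it down; no purely two-dimensional argument along your lines is known.
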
 
    
    We think that it is  striking that our bound does not depend on $\phi$.  For comparison, in the case of $\Aut{ \bbZ^2} \simeq \GL_2(\mathbb{Z})$, by \cite{LMD, T}, one encounters  class numbers of certain rings of integers of  real quadratic fields, which are conjectured to have interesting (but elusive) behavior: Gauss' class number problem conjectures that infinitely many such class numbers are $1$, but   these numbers are  unbounded when the field extension varies among real quadratic extensions, as proved by Montgomery and Weinberger \cite{MW} and quantitatively suggested by Cohen and Lenstra's heuristics \cite{CL84}, and its revision by Bartel and Lenstra \cite{BL20}. 
    We explain in the appendix that the class numbers of elements of $\GL_2(\bbZ)$ are indeed finite but unbounded, an argument explained to us by Sara Checcoli.


    Finiteness of the class number of a pseudo-Anosov is not a surprise.  Here is a simple argument. Given $\phi$ a pseudo-Anosov automorphism on a surface $\Sigma$, the stretch factor, or entropy, { $\displaystyle \lim_{n\to \infty} \log |\phi^n(\gamma)|/n $ does not depend  on $\gamma\neq 1$} in the fundamental group, nor on the metric up to quasi-isometry. Hence it is an invariant of the commensurated-conjugacy class of $\phi$.  It also equals the translation length of $\phi$ in the Teichm\"uller space of the associated surface, and there are finitely many conjugacy classes of a pseudo-Anosov that can  have such translation length. { (See \cite{em} for much
    	finer results on  counting  closed geodesics in moduli space.) Hence there can only be finitely many conjugacy classes of $\phi$ in the commensurated-conjugacy class of $\phi$. } This argument does not, however,  provide any uniform bound. 
    
  Our upper bound is very likely non-optimal. It seems that an optimal bound would be given in terms of the value of a  subgroup-growth function of certain orbifolds. These  are difficult to estimate sharply, see \cite{LS}.    
    
    Examples of commensurably conjugate automorphisms that are not conjugate are not immediate. Our result  and methods however suggest a `recipe' for producing examples of non-conjugate pseudo-Anosov automorphisms that are commensurably conjugate. 
     {We look at the induced action on the abelianization: elements that are already conjugate in $\Aut G$ would necessarily be conjugate in  $\Aut{G/[G,G]}$, and in the latter group,  linear algebra tools may be applied to disprove the existence of such a conjugacy. }

    \begin{thm}\label{theo;example}
        Let $\Sigma$ be a closed orientable surface of genus $3$, with a base point $p$, and $G = \pi_1(\Sigma, p) $.   There exist $\phi$ and $\psi$ pseudo-Anosov automorphisms of $G$ that are commensurably conjugate but whose images in $\Aut {G/[G,G]} $ are not commensurably conjugate. In particular $\phi$ and $\psi$ are not conjugate in $\Aut G$.
    \end{thm}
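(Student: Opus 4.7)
The plan is to realize both $\phi$ and $\psi$ as two distinct descents to $\Sigma$ of a single pseudo-Anosov $\hat f$ living on a common finite cover $\hat\Sigma$ of $\Sigma$. Commensurable conjugacy in $\Comm G$ will then be built into the construction---the restrictions of $\phi$ and $\psi$ to $\pi_1(\hat\Sigma)$, viewed as a finite-index subgroup of $G$ in two different ways, both coincide with $\hat f_\ast$---while non-conjugacy in $\Aut G$ will be detected, in fact obstructed, in the abelianization: the rational matrices representing the $\phi$- and $\psi$-actions on $H_1(\Sigma;\bbQ)$ will have non-conjugate rational canonical forms in $GL_{2g}(\bbQ) \simeq \Comm{G/[G,G]}$.

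To realize this, I would search for a closed orientable surface $\hat\Sigma$ carrying a free action of a finite group $\Gamma$ that admits two distinct subgroups $K_1, K_2 \le \Gamma$ with $\hat\Sigma/K_1 \cong \hat\Sigma/K_2 \cong \Sigma$. Since $\chi(\Sigma) = -4$, a natural minimal candidate is $\hat\Sigma$ of genus $5$ with $\Gamma = \bbZ/2 \oplus \bbZ/2$ acting freely (producing a genus-$2$ total quotient $\hat\Sigma/\Gamma$), with $K_1, K_2$ being two of the three subgroups of order $2$. Such a cover is obtained, for example, by starting from a genus-$2$ surface $\Sigma'$ and a suitable surjection $\pi_1(\Sigma') \onto \bbZ/2 \oplus \bbZ/2$. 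Next, I would construct a pseudo-Anosov $\hat f$ on $\hat\Sigma$ commuting with the full $\Gamma$-action, e.g.\ by applying Thurston's construction to a pair of $\Gamma$-invariant, jointly filling multicurves on $\hat\Sigma$. Being $\Gamma$-equivariant, $\hat f$ descends to pseudo-Anosov homeomorphisms $f_1, f_2$ on $\hat\Sigma/K_1, \hat\Sigma/K_2$; after selecting compatible base points and identifying the two quotient surfaces with $\Sigma$, we obtain $\phi, \psi \in \Aut G$. Commensurable conjugacy of $\phi$ and $\psi$ follows, as the two covering maps identify $\pi_1(\hat\Sigma)$ as two finite-index subgroups of $G$ in a way that intertwines $\phi$ and $\psi$ via $\hat f_\ast$.

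To complete the argument, one computes the $\bbZ$-linear actions of $\phi$ and $\psi$ on $H_1(\Sigma; \bbZ) \cong \bbZ^6$. The transfer map realizes $H_1(\Sigma; \bbQ)$ as the invariant subspace $H_1(\hat\Sigma; \bbQ)^{K_i}$, and the induced action of $\phi$ (resp.\ $\psi$) is simply the restriction of $\hat f_\ast$ to this subspace. It then suffices to arrange that, as $\langle \hat f \rangle$-modules, $H_1(\hat\Sigma;\bbQ)^{K_1}$ and $H_1(\hat\Sigma;\bbQ)^{K_2}$ have distinct characteristic polynomials, or more finely distinct rational canonical forms; any such discrepancy forces the images in $GL_6(\bbZ)$ to be non-conjugate over $\bbQ$, hence not commensurably conjugate in $\Aut{G/[G,G]}$, and \emph{a fortiori} $\phi$ and $\psi$ are not conjugate in $\Aut G$. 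The main obstacle is the explicit realization: simultaneously arranging the free $\Gamma$-action with two homeomorphic genus-$3$ quotients, a genuinely $\Gamma$-equivariant pseudo-Anosov, and enough asymmetry in the $\Gamma$-module $H_1(\hat\Sigma;\bbQ)$ to separate the restrictions of $\hat f$ to the two invariant subspaces. This is why a concrete, hand-crafted example, rather than a general existence argument, is called for.
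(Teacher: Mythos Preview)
Your strategy is essentially the paper's, viewed from the opposite end of the covering tower. The paper works bottom-up: it starts from a genus-$2$ surface $\Sigma'$ with an explicit pseudo-Anosov $\Phi=\tau_\mu^n\tau_\lambda^{2m}$ (Penner--Thurston, with $\mu$ separating and $\lambda$ a specific non-separating curve), then takes two distinct index-$2$ covers $\Sigma_1,\Sigma_2$ (both genus $3$) to which $\Phi$ lifts. Your top-down picture with $\hat\Sigma$ of genus $5$ carrying a free $\bbZ/2\oplus\bbZ/2$ action is exactly the common cover of these two $\Sigma_i$ over $\Sigma'$, and a $\Gamma$-equivariant $\hat f$ is precisely the lift of such a $\Phi$. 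So the two setups coincide.

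Where the paper goes further is in the explicit realization you flag as the obstacle. The distinguishing homological invariant is not the characteristic polynomial but the \emph{rank} of $M-I_6$: in one cover the curve $\lambda$ lifts connectedly, so $\tau_\lambda^2$ lifts to a single Dehn twist and the induced map on $H_1$ is identity plus a rank-$1$ endomorphism; in the other cover $\lambda$ lifts to two disjoint curves, and a direct matrix computation (the paper writes out the $6\times 6$ matrix in an explicit basis) shows $M-I_6$ has rank $2$. Since $\operatorname{rank}(M-I)$ is a $GL_6(\bbQ)$-conjugacy invariant, this already obstructs commensurable conjugacy in $\Aut{G/[G,G]}$. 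Your proposal is a correct outline, but as you yourself note, the content of the theorem lies entirely in producing and verifying one concrete instance; the paper's choice to build $\Phi$ on the base (where the twist curves are simplest) rather than $\hat f$ on the cover is what makes that computation tractable.
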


    Gauss famously conjectures (in his `class number problem')  that infinitely many real quadratic extensions of $\bbQ$ have class number equal to one \cite{stark}. 
    We 
    observe that in our non-abelian analogue, the corresponding question can be settled. 
    
     \begin{thm}\label{theo;classnumberone}
        There are infinitely many commensurated classes of  automorphisms of hyperbolic $2$-orbifolds, that have class number one.
    \end{thm}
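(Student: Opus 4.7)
The plan is to exhibit an infinite family of pairwise distinct commensurated-conjugacy classes of automorphisms of hyperbolic $2$-orbifolds, each consisting of a single $\Aut$-conjugacy class. The guiding observation is that if a hyperbolic $2$-orbifold group $\Gamma = \pi_1 O$ is \emph{commensurator-rigid}, i.e.\ the natural map $\Aut{\Gamma} \to \Comm{\Gamma}$ is surjective, then \emph{every} automorphism of $\Gamma$ tautologically has class number one; so it suffices to find infinitely many such $\Gamma$ and any nontrivial automorphism of each.

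I would produce commensurator-rigid orbifold groups as follows. By Margulis' arithmeticity dichotomy, if $\Gamma$ is a non-arithmetic Fuchsian lattice then its relative commensurator $\mathrm{Comm}_{\mathrm{Isom}(\bbH^2)}(\Gamma)$ is itself a lattice containing $\Gamma$ with finite index; choosing $\Gamma$ to be the minimal orbifold in its commensurability class then forces this relative commensurator to equal $\Gamma$ itself. Takeuchi's classification of arithmetic triangle groups supplies infinitely many non-arithmetic hyperbolic triangle groups $\Delta(p,q,r)$ with $\tfrac{1}{p}+\tfrac{1}{q}+\tfrac{1}{r}<1$, and these are pairwise non-commensurable, giving the desired infinite family of candidate orbifolds $O_n$.

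The main obstacle is passing from the relative (geometric) commensurator to the abstract commensurator $\Comm{\Gamma}$: in dimension two there is no Mostow rigidity, so an a priori abstract isomorphism between finite-index subgroups of $\Gamma$ need not be realized by an isometry of $\bbH^2$. To bridge this gap, I would exploit the orbifold's torsion structure: conjugacy classes of torsion elements are preserved under any abstract isomorphism and correspond to cone points, so an abstract commensuration must preserve the underlying combinatorial/orbifold data, and the Dehn–Nielsen–Baer theorem applied to a suitable finite orbifold cover then promotes such an abstract commensuration to a homeomorphism of $O$, hence to an element of $\Aut{\pi_1 O}$. This rigidity step is the technical heart of the argument.

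Once commensurator-rigidity is in place for the sequence $O_n$, the conclusion is almost immediate: for each $n$ choose any nontrivial $\phi_n \in \Aut{\pi_1 O_n}$, say a pseudo-Anosov whenever $O_n$ carries one, and otherwise a non-inner finite-order automorphism. Since the $O_n$ are mutually non-commensurable, the commensurated-conjugacy classes of the $\phi_n$ are mutually distinct (an invariant such as the commensurability class of $\pi_1 O_n$ separates them), and by construction each class has class number one.
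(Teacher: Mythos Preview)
Your approach has a genuine and fatal gap: the ``commensurator-rigid'' hyperbolic $2$-orbifold groups you need do not exist. You are conflating the \emph{relative} commensurator in $\mathrm{Isom}(\bbH^2)$, to which Margulis' dichotomy applies, with the \emph{abstract} commensurator $\Comm{\Gamma}$, which is what governs the class number. These are wildly different objects in dimension two. Every closed hyperbolic $2$-orbifold group has a finite-index subgroup that is a closed surface group of genus $\geq 2$, and any two such surface groups share a common finite cover; hence \emph{all} hyperbolic $2$-orbifold groups are abstractly commensurable and have the same abstract commensurator. That common $\Comm{\Gamma}$ is enormous (it contains the automorphism group of every finite-index surface subgroup, and is not even locally compact), so $\Aut{\Gamma}\to\Comm{\Gamma}$ is never surjective. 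In particular your triangle groups $\Delta(p,q,r)$ are all mutually abstractly commensurable, so neither your rigidity step nor your separating invariant (``commensurability class of $\pi_1 O_n$'') survives. Your proposed fix via torsion and Dehn--Nielsen--Baer cannot help: once you pass to a torsion-free finite-index subgroup, abstract isomorphisms to other surface subgroups abound and have no reason to extend to the ambient orbifold group.

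The paper's proof proceeds by an entirely different route, working in $PSL_2(\bbC)$ rather than $PSL_2(\bbR)$. It produces infinitely many maximal non-arithmetic uniform lattices $\Lambda^0_i$ in $PSL_2(\bbC)$, uses Agol's virtual fibration theorem to find a maximal fibering subgroup $\Lambda_i\leq\Lambda^0_i$ with $2$-orbifold fiber $G_i$ and monodromy $\phi_i$, and then argues via the machinery of Section~3 (Mostow rigidity in $\bbH^3$, Proposition~\ref{prop;fiber_thickens}, and Borel's commensurator theorem for non-arithmetic lattices) that any $\psi_i$ commensurably conjugate to $\phi_i$ must have $\rho_{\psi_i}(G_i)=\rho_{\phi_i}(G_i)$, forcing conjugacy in $\Aut{G_i}$. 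The three-dimensional rigidity is precisely what compensates for the failure of rigidity on the surface; there is no way to carry out the argument purely inside the $2$-orbifold.
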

 
    \medskip
    
    \paragraph{\bf Summary of the paper.} After introducing the mapping tori involved and their relation to commensurated conjugacy (in Section 2), we discuss in Section 3 their representations as lattices in $\PSL_2(\bbC)$. We show that, for different commensurably conjugate automorphisms, the groups of the mapping tori have representations that  commensurate the image of the surface group in $\PSL_2(\bbC)$. We then use a result of Leininger, Long and Reid \cite{LLR}  on the discreteness of the commensurator in {$\PSL_2(\bbC)$} of such an image, to obtain that all such representations land in a single lattice, and we  obtain Proposition \ref{prop;simple_bound}. After finishing this work, {we learned of the related, but  more restrictive  notion of fibered commensurability investigated by} Calegari, Sun and Wang \cite[Section 2]{Cal}. We then prove our main theorem at the end of Section 3. In Section 4 we provide an explicit example, and a recipe for general examples. In Section 5 we address the class number one situation, proving Theorem \ref{theo;classnumberone}. We thought it worthwhile to attach an appendix, in which the algebraic number theory relevant to the situation of $\bbZ^n$ is briefly presented, in a way that allows for an analogy with geometry and the case of  surface group automorphisms. We briefly survey orders and ideal classes, provide an argument, explained to us by Sara Checcoli, as to why the class numbers of matrices in $\GL_2(\bbZ)$ are unbounded. We also explain a variant of Latimer-MacDuffee's proof  that allows for a geometric perspective on the situation, and is suited to an  analogy with surface group automorphisms. Finally, we explain this analogy and its limitations.
    
\medskip

    \paragraph{Acknowledgments.}	{The authors thank the referee for valuable comments, and suggestions. 
    The first author would like to thank Edgar Bering and  Sara Checcoli for discussions. } 


\section{Fundamental groups of mapping tori}

  The aim of this section is to prove Lemma \ref{lem;fibered_mapping_tori_commensurated} which converts the problem of
understanding commensurated-conjugacy classes of automorphisms of a group $G$ to commensurability of the associated semi-direct product by $\mathbb{Z}$. The one extra criterion that is vital in Lemma \ref{lem;fibered_mapping_tori_commensurated} is that the commensuration of the
associated semi-direct product also commensurates $G$.

    \begin{lemma} \label{lem;the_good_subgroup_and_m}

        If $ \phi, \psi$ are automorphisms of $G$ in the same commensurated-conjugacy class, there exists $H, H'$ of finite index in $G$, 
        and an isomorphism $\alpha : H\to H'$  such that,  $\psi(H)=H, \phi(H') = H'$,   $\psi|_{H} = \alpha^{-1} \circ   \phi \circ \alpha|_{H} $, where $|_H$ denotes  restriction to $H$ .  
    \end{lemma}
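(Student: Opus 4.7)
The plan is to unpack the definition of commensurable conjugacy, then refine the subgroup on which the conjugacy relation is witnessed to a characteristic subgroup of $G$; the remaining invariance property will then follow formally.

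First, I would spell out the hypothesis. Saying that $\phi$ and $\psi$ lie in the same commensurated-conjugacy class means there is an element of $\Comm G$, represented by an isomorphism $\alpha : K \to K'$ between two finite-index subgroups of $G$, whose conjugation action sends $[\phi]$ to $[\psi]$. Unwinding the equivalence relation defining $\Comm G$, and replacing $\alpha$ by a suitable restriction, this produces a finite-index subgroup $L$ of $G$, contained in $K$ and with $\phi(\alpha(L)) \subseteq K'$, on which the honest pointwise equality
\[
\alpha \circ \psi|_L \;=\; \phi \circ \alpha|_L
\]
holds, equivalently $\psi(l) = \alpha^{-1}(\phi(\alpha(l)))$ for every $l \in L$.

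The main step is to refine $L$ to a finite-index subgroup $H \subseteq L$ that is invariant under $\psi$. The clean way to do this is to make $H$ characteristic in $G$: since $G$ (a surface group, or any finitely generated group) has only finitely many subgroups of any given index by M.~Hall, the intersection of all subgroups of $G$ of index at most $[G:L]$ is a finite-index characteristic subgroup contained in $L$; call this $H$. Being characteristic, it is automatically preserved by $\psi$, so $\psi(H) = H$. Set $H' := \alpha(H)$; this is a finite-index subgroup of $G$ because $\alpha$ is an isomorphism between finite-index subgroups.

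The last step is to check that $\phi(H') = H'$ and that the restricted $\alpha$ conjugates $\phi$ to $\psi$ on $H$. Specializing the relation $\alpha \circ \psi|_L = \phi \circ \alpha|_L$ to $H \subseteq L$ gives $\alpha(\psi(H)) = \phi(\alpha(H))$; using $\psi(H) = H$, the left-hand side equals $\alpha(H) = H'$, so $\phi(H') = H'$ as required. Restricting $\alpha$ to $H$ provides the desired isomorphism $H \to H'$, and $\psi|_H = \alpha^{-1} \circ \phi \circ \alpha|_H$ is just the further restriction of the relation on $L$.

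The only genuinely careful point is to arrange $H$ to be simultaneously small enough to witness the commensurability equality (i.e.\ $H \subseteq L$) and invariant under $\psi$; the characteristic-core construction settles both requirements at once. Once $\psi(H)=H$ is secured, the $\phi$-invariance of $H' = \alpha(H)$ is forced purely by the conjugacy relation, so no additional argument is needed.
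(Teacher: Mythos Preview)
Your proof is correct and follows essentially the same route as the paper's: unpack the commensurated conjugacy to get a finite-index subgroup $L$ on which $\alpha^{-1}\circ\phi\circ\alpha=\psi$, then replace $L$ by a characteristic subgroup $H$ of $G$ contained in $L$ (the paper intersects the $\Aut G$-orbit of $L$, you intersect all subgroups of bounded index; both yield a characteristic finite-index subgroup), and finally read off $\phi(H')=H'$ from the conjugacy relation together with $\psi(H)=H$. The only cosmetic difference is the choice of characteristic core, and both versions silently use that $G$ has only finitely many subgroups of a given finite index.
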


    \begin{proof}
        Let $[\alpha] \in \Comm G$ such that in
         $\Comm G$, 
         $[\psi] = {[\alpha]}^{-1} 
          [\phi]  [\alpha]$. 
         Realize $[\alpha]$ by an isomorphism $\alpha: T_1 \to T_2$ for $T_1, T_2$ finite index subgroups of $G$.

        Since $[\psi] = [\alpha]^{-1}  [\phi]  [\alpha]$, there is a further finite index subgroup $Y_1$ of $T_1$ on which  $\alpha^{-1} \circ \phi \circ \alpha = \psi$.  Let $Y_2= \alpha(Y_1)$. Observe that $\psi(Y_1)$ must be in $T_1$ but is perhaps not $Y_1$.

     Let $H$ denote the intersection of all subgroups in the $\Aut G$-orbit of $Y_1$, and $H'$ its image under $\alpha$. 
     {Then $H, H'$ are also of finite index.}  By construction, $\psi$ preserves $H$ (as well as every automorphism). We still have  $\alpha^{-1} \circ \phi \circ \alpha = \psi$ after to restriction  $H$. It follows that $\phi(H')= H'$. 
     
    \end{proof}

    Given $\phi \in \Aut G$, set  $\langle t_\phi \rangle$ to be an abstract infinite cyclic group, and  consider $\Gamma_\phi = G\rtimes_\phi \langle t_\phi \rangle$. In this semi-direct product, we say that $G$ is the \emph{ fiber}.
    
    \begin{remark}
     {This terminology is inspired by the motivating example in this 
    	paper, where $G=\pi_1(S)$ with $S$ a closed surface. Suppose 
    	that $\Phi: S \to S$ is a homeomorphism, so that $\phi$ equals the automorphism of $G$ induced by $\Phi$ after choosing a base-point appropriately. Let $M$ denote the 3-manifold given
    	by the \emph{mapping torus} of $\Phi$. 
    	Then  $\Gamma_\phi$ is the fundamental group of $M$. Thus, from
    	a topological point of view, the relevant object to be considered is the mapping torus $M$. But for the purposes of this
    	 section, it suffices to consider only the fundamental group of $M$. }
    \end{remark}

    \begin{lemma}\label{lem;fibered_mapping_tori_commensurated}
        If $ \phi, \psi$ are automorphisms of $G$ in the same commensurated-conjugacy class, then, 
        $\Gamma_{\phi}$ and $\Gamma_{\psi}$ are commensurable by a homomorphism commensurating the fiber.
    \end{lemma}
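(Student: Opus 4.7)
The plan is to realize the commensuration between $\Gamma_\phi$ and $\Gamma_\psi$ using the intertwining isomorphism provided by Lemma \ref{lem;the_good_subgroup_and_m}, by constructing finite index mapping subtori that are directly identified through this intertwining.

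First, I would apply Lemma \ref{lem;the_good_subgroup_and_m} to produce finite index subgroups $H, H' \leq G$ with $\psi(H) = H$ and $\phi(H') = H'$, together with an isomorphism $\alpha : H \to H'$ satisfying $\alpha \circ \psi|_H = \phi|_{H'} \circ \alpha$. Since $\psi$ preserves $H$ and $\phi$ preserves $H'$, the mapping tori
\[ \Gamma'_\psi := H \rtimes_{\psi|_H} \langle t_\psi \rangle \leq \Gamma_\psi, \qquad  \Gamma'_\phi := H' \rtimes_{\phi|_{H'}} \langle t_\phi \rangle \leq \Gamma_\phi \]
are well-defined subgroups, each of index $[G:H] = [G:H']$ in the ambient mapping torus (a transversal for $H$ in $G$ also serves as a transversal for $\Gamma'_\psi$ in $\Gamma_\psi$, since the $t_\psi$-factors are absorbed).

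Next, I would define a map $\tilde\alpha : \Gamma'_\psi \to \Gamma'_\phi$ by setting $\tilde\alpha(h) = \alpha(h)$ for $h \in H$ and $\tilde\alpha(t_\psi) = t_\phi$, and extending multiplicatively. To see that this extends to a group isomorphism, I only need to verify that the defining semidirect-product relation is preserved: for $h \in H$,
\[ \tilde\alpha(t_\psi h t_\psi^{-1}) = \tilde\alpha(\psi(h)) = \alpha(\psi(h)) = \phi(\alpha(h)) = t_\phi \, \alpha(h)\, t_\phi^{-1} = \tilde\alpha(t_\psi)\, \tilde\alpha(h)\, \tilde\alpha(t_\psi)^{-1}, \]
which is exactly the intertwining relation of Lemma \ref{lem;the_good_subgroup_and_m}. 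Injectivity and surjectivity then follow from the invertibility of $\alpha$ and the analogous inverse construction using $\alpha^{-1}$.

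Finally, $\tilde\alpha$ is an isomorphism between the finite index subgroups $\Gamma'_\psi \leq \Gamma_\psi$ and $\Gamma'_\phi \leq \Gamma_\phi$, and by construction it sends the fiber portion $H$ of $\Gamma'_\psi$, which is a finite-index subgroup of the fiber $G$ of $\Gamma_\psi$, onto $H'$, a finite-index subgroup of the fiber $G$ of $\Gamma_\phi$. Thus $\tilde\alpha$ commensurates the fibers, which is precisely the conclusion. There is no real obstacle in the argument; the only care needed is the bookkeeping to ensure that the intertwining relation of Lemma \ref{lem;the_good_subgroup_and_m} promotes to a genuine homomorphism on the semidirect product, and that $\tilde\alpha$ respects the fiber structure.
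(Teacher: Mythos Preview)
Your proof is correct and follows essentially the same approach as the paper: apply Lemma~\ref{lem;the_good_subgroup_and_m} to obtain $H$, $H'$, $\alpha$, form the semidirect products $H\rtimes_{\psi|_H}\langle t_\psi\rangle$ and $H'\rtimes_{\phi|_{H'}}\langle t_\phi\rangle$ as finite-index subgroups of $\Gamma_\psi$ and $\Gamma_\phi$, and extend $\alpha$ to an isomorphism between them by sending stable letter to stable letter. The only difference is that you spell out the verification of the semidirect-product relation explicitly, whereas the paper simply asserts the isomorphism; one minor caveat is that your parenthetical claim $[G:H]=[G:H']$ is not needed (and not obviously true for arbitrary $G$), but it plays no role in the argument.
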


    \begin{proof}
        Let $H, H'$ be the subgroups obtained in the previous Lemma.  We make semi-direct products    $H\rtimes_{\psi} \langle s \rangle$ and  $H'\rtimes_{\phi} \langle q \rangle$. These two groups  are isomorphic by an isomorphism restricting to $\alpha$ on $H$ and sending $s$ {to} $q$.   Moreover these two groups embed  as finite index subgroups of $\Gamma_{\phi}$ and $\Gamma_{\psi}$ sending $s$ and $q$ respectively to $t_\psi$ and $t_\phi$.
    \end{proof}

\section{Surface groups and pseudo-Anosov automorphisms}

The aim of this section is to prove Theorem \ref{theo;intro}. We provide a quick roadmap through the section for the benefit of the reader.
Lemmas \ref{lem;lattice_representations} and \ref{lem;reps_rho} allow us to upgrade the study of an abstract commensurated-conjugacy class of pseudo-anosov automorphisms to the study of hyperbolic 3-manifolds arising as mapping tori. This makes use of Lemma \ref{lem;fibered_mapping_tori_commensurated}.  Lemma \ref{lem;rhoG_equal_psi_conjugate} then further upgrades the abstract commensurated-conjugacy class to the same conjugacy class in the automorphism group (and not just the outer automorphism group) of $G=\pi_1(S)$.

Propositions \ref{prop;fiber_thickens}  and \ref{prop;simple_bound} ensure that the images of $G$ under all
 the representations into $PSL(2,\mathbb{C})$ arising from Lemma \ref{lem;reps_rho} together generate
a \emph{finite index} supergroup $F_0$ of $G$, and hence the fundamental group of a hyperbolic 2-orbifold (Lemma \ref{lem;2orbi}). 

Corollary \ref{coro;1}, Lemma \ref{lem;rank} and Corollary \ref{coro;2} furnish effective estimates on the number of subgroups of $F_0$ of a fixed index.

These ingredients are finally assembled together to prove Theorem \ref{theo;intro} at the end of the Section.

    Recall that in a group $A$, an element $a$ is in the $A$-commensurator of a subgroup $B$ if $aBa^{-1} \cap B$ has finite index in $B$.  
  
    \begin{lemma}\label{lem;lattice_representations}
        Let $G$ be the fundamental group of a closed orientable surface of genus $g\geq 2$. If $\phi$ and $\psi$ are automorphisms of $G$ in the same commensurated-conjugacy class, and if $\phi$ determines a pseudo-Anosov mapping class, then $\psi$ does as well.  
    \end{lemma}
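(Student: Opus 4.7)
The plan is to use Lemma \ref{lem;the_good_subgroup_and_m} to transfer the question to finite covers of $\Sigma$, where the abstract commensuration $\alpha$ can be realized by an actual homeomorphism, and then appeal to the Nielsen-Thurston trichotomy after producing a pseudo-Anosov finite lift of $[\psi]$.

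First I would apply Lemma \ref{lem;the_good_subgroup_and_m} to obtain finite-index subgroups $H, H' \leq G$ together with an isomorphism $\alpha : H \to H'$ satisfying $\phi(H') = H'$, $\psi(H) = H$, and $\psi|_H = \alpha^{-1} \circ \phi|_{H'} \circ \alpha$. Identify $H$ and $H'$ with the fundamental groups of finite covers $\widetilde\Sigma_H$ and $\widetilde\Sigma_{H'}$ of $\Sigma$; these are closed orientable surfaces of equal Euler characteristic. By Dehn-Nielsen-Baer applied to $\widetilde\Sigma_{H'}$, the automorphism $\phi|_{H'}$ induces an outer automorphism realized by a mapping class in $MCG^*(\widetilde\Sigma_{H'})$ that is a finite lift of $[\phi]$; since the transverse measured foliations of a pseudo-Anosov pull back to transverse, minimal, uniquely ergodic measured foliations on any finite cover, this lift is again pseudo-Anosov. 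Similarly, $\alpha$ is realized by a homeomorphism $\widetilde\Sigma_H \to \widetilde\Sigma_{H'}$. The conjugation identity $\psi|_H = \alpha^{-1} \circ \phi|_{H'} \circ \alpha$ then shows that the outer class of $\psi|_H$ is obtained from a pseudo-Anosov by conjugation by a homeomorphism, hence is itself pseudo-Anosov.

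Since $\psi(H) = H$, the mapping class $[\psi|_H] \in MCG^*(\widetilde\Sigma_H)$ is a finite lift of $[\psi]$. It remains to rule out that $[\psi]$ is periodic or reducible. If $[\psi]$ were periodic, then some power of $[\psi]$ would be trivial in $MCG^*(\Sigma)$, so the same power of $[\psi|_H]$ would be trivial in $MCG^*(\widetilde\Sigma_H)$, contradicting that it is pseudo-Anosov (and therefore of infinite order). If $[\psi]$ were reducible, preserving an essential multicurve $\gamma \subset \Sigma$ up to isotopy, the preimage of $\gamma$ in $\widetilde\Sigma_H$ would be an essential multicurve preserved up to isotopy by the lift, again contradicting its pseudo-Anosov nature. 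The only remaining Nielsen-Thurston type is pseudo-Anosov.

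The main obstacle I anticipate is bookkeeping between the algebra and the topology. Dehn-Nielsen-Baer identifies $\Out$ with $MCG^*$, not $\Aut$ with $MCG^*$, so converting the automorphism-level identity of Lemma \ref{lem;the_good_subgroup_and_m} into an equality of mapping classes requires tracking the relevant inner automorphisms on the two covers, keeping in mind that the relation only holds on restrictions to finite-index subgroups. Once this translation is made carefully, the rest of the argument is standard surface topology. An alternative route is to combine Lemma \ref{lem;fibered_mapping_tori_commensurated} with Thurston's hyperbolization: $\Gamma_\phi$ is word-hyperbolic, hence so is its commensurable partner $\Gamma_\psi$, which rules out $\mathbb{Z}^2$ subgroups that would appear in $\Gamma_\psi$ if $[\psi]$ were periodic or reducible; this approach is conceptually cleaner but imports more machinery than the direct one above.
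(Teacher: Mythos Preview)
Your argument is correct. It differs from the paper's proof, which in fact takes exactly the ``alternative route'' you sketch at the end: the paper invokes Thurston's hyperbolization of mapping tori to conclude $\Gamma_\phi$ is word-hyperbolic, transfers this to $\Gamma_\psi$ via the commensuration of Lemma~\ref{lem;fibered_mapping_tori_commensurated}, and then reads off that $\psi$ is pseudo-Anosov (since otherwise $\Gamma_\psi$ would contain a $\bbZ^2$ or fail to be hyperbolic). Your primary argument instead stays entirely on the surfaces: you lift to the covers furnished by Lemma~\ref{lem;the_good_subgroup_and_m}, realize $\alpha$ as a homeomorphism via Dehn--Nielsen--Baer, note that lifts of pseudo-Anosovs are pseudo-Anosov, and then push the Nielsen--Thurston type back down by ruling out periodic and reducible behavior for $[\psi]$. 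This is more elementary---it avoids hyperbolization and 3-manifold theory altogether---at the cost of the inner-automorphism bookkeeping you flag and some standard but not entirely trivial facts about how Nielsen--Thurston types behave under finite covers. The paper's proof is a three-line application of heavy machinery that is needed later anyway; yours would be preferable in a context where one wanted to keep the discussion intrinsic to surfaces.
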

    
    \begin{proof}
        By Thurston's hyperbolization of mapping tori 
     {   \cite[Section 5]{thurston-hypstr2} (see case (iii) of \cite[Theorem 0.1]{thurston-hypstr2}),} the group $\Gamma_\phi$ is isomorphic to a lattice in $\PSL_2\bbC$, hence it is word-hyperbolic. Therefore $\Gamma_\psi$ is as well, by the quasi-isometry induced by the commensuration. Hence $\psi$ determines a pseudo-Anosov mapping class.
    \end{proof}

    \begin{lemma}\label{lem;reps_rho}
        There are faithful representations $\rho_\phi$, $\rho_\psi$ of  $\Gamma_\phi$ and $\Gamma_\psi$ into $\PSL_2(\bbC)$, such that for all $h\in H, \rho_\psi(h) = \rho_\phi (\alpha(h))  $ and  $\rho_\phi(t_\phi) = \rho_\psi(t_\psi)$. 
 
        Moreover, given $\rho_\phi$, the representation $\rho_\psi$ satisfying the above is unique.
    \end{lemma}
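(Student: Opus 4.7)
The plan is to fix $\rho_\phi$ to be the discrete faithful representation of $\Gamma_\phi$ into $PSL_2(\bbC)$ guaranteed by Thurston's hyperbolization (already invoked in Lemma \ref{lem;lattice_representations}), to build $\rho_\psi$ first on a finite-index subgroup of $\Gamma_\psi$ by pullback along the commensurating isomorphism, and then to extend to all of $\Gamma_\psi$ using Mostow--Prasad rigidity. Uniqueness will follow from a Zariski-density argument.

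Concretely, I would begin by recalling from the proof of Lemma \ref{lem;fibered_mapping_tori_commensurated} the isomorphism $\beta : H\rtimes_\psi \langle s\rangle \to H'\rtimes_\phi \langle q\rangle$ sending $h\mapsto \alpha(h)$ and $s\mapsto q$, where these two groups embed as finite-index subgroups of $\Gamma_\psi$ and $\Gamma_\phi$ via $s\mapsto t_\psi$ and $q\mapsto t_\phi$. The composition $\rho_\phi\circ \beta$ is then a discrete faithful representation of $H\rtimes_\psi \langle s\rangle$ into $PSL_2(\bbC)$ that already satisfies $\rho_\phi\circ\beta(h) = \rho_\phi(\alpha(h))$ and $\rho_\phi\circ\beta(s) = \rho_\phi(t_\phi)$.

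For the extension step, I would pick any discrete faithful representation $\rho_\psi^0 : \Gamma_\psi \to PSL_2(\bbC)$ (such exists by Lemma \ref{lem;lattice_representations}); its restriction to the finite-index sublattice $H\rtimes_\psi \langle s\rangle$ is itself a discrete faithful representation of a cofinite-volume lattice, and so Mostow--Prasad rigidity, applied at the sublattice level, produces some $g\in PSL_2(\bbC)$ with $g\,\rho_\psi^0|_{H\rtimes_\psi\langle s\rangle}\,g^{-1} = \rho_\phi\circ \beta$. Setting $\rho_\psi := g\,\rho_\psi^0\,g^{-1}$ yields a discrete faithful representation of all of $\Gamma_\psi$ extending $\rho_\phi\circ\beta$, and in particular matching the required equalities on $H$ and on $t_\psi$.

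For uniqueness, suppose $\rho_\psi'$ is another faithful representation satisfying the stated conditions. Then $\rho_\psi$ and $\rho_\psi'$ agree on $H$ and on $t_\psi$, hence on the finite-index subgroup $\Lambda := H\rtimes_\psi \langle t_\psi\rangle$ of $\Gamma_\psi$. Choosing a finite-index subgroup $N\subset \Lambda$ that is normal in $\Gamma_\psi$, normality gives $\gamma n\gamma^{-1}\in N$ for all $\gamma\in\Gamma_\psi, n\in N$, and comparing the two evaluations shows that $\rho_\psi(\gamma)^{-1}\rho_\psi'(\gamma)$ centralizes $\rho_\psi(N)$. Since $\rho_\psi|_N$ is the discrete faithful image of a lattice in $PSL_2(\bbC)$, it is Zariski-dense, so the centralizer lies in the trivial center of $PSL_2(\bbC)$, forcing $\rho_\psi'=\rho_\psi$. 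The only point that requires genuine care is invoking Mostow--Prasad rigidity at the sublattice level and keeping track of the fact that the conjugation by $g$ simultaneously produces both required equalities; this is routine since both are encoded in the single identification $\rho_\psi|_{H\rtimes_\psi\langle s\rangle} = \rho_\phi\circ\beta$.
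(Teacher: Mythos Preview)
Your proof is correct and follows essentially the same approach as the paper: both construct $\rho_\psi$ by conjugating an arbitrary lattice representation of $\Gamma_\psi$ via Mostow rigidity applied to the common finite-index sublattice $H\rtimes\langle s\rangle$, and both derive uniqueness from Zariski density and the triviality of the center of $PSL_2(\bbC)$. Your uniqueness argument is marginally more direct---you avoid re-invoking Mostow by passing to a normal subgroup $N$ and showing $\rho_\psi(\gamma)^{-1}\rho_\psi'(\gamma)$ centralizes $\rho_\psi(N)$---but this is a minor variation on the same idea.
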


    \begin{proof}
        Consider the two uniform lattice representations $\rho_\phi$, $\rho_\psi$ of $\Gamma_\phi$ and $\Gamma_\psi$, respectively, into $\PSL_2\bbC$.

        The images contain isomorphic uniform sublattices, images of $H\rtimes \langle s\rangle$ and $H' \rtimes \langle q\rangle$ (following the notation of Lemma \ref{lem;fibered_mapping_tori_commensurated}). Through the isomorphism between these two groups, one may see them as lattice representations of the same group $H\rtimes \langle s\rangle$.  Therefore, by Mostow rigidity \cite[Theorem 12.1]{mostow}, one may conjugate $\rho_\psi$ so that for all $h\in H, \rho_\psi(h) = \rho_\phi (\alpha(h))  $  and also $\rho_\psi(q) = \rho_\phi(s)$.

        Uniqueness follows also from Mostow rigidity \cite[Theorem 12.1]{mostow}: two such representations $\rho_\psi, \rho'_\psi$ must be conjugate by a conjugator centralizing $\rho_\psi(H)$, but this {latter} group is Zariski dense in $\PSL_2(\bbC)$, hence its centralizer is the center of $\PSL_2(\bbZ)$ which is trivial.
    \end{proof}

    \begin{lemma}\label{lem;rhoG_equal_psi_conjugate}
        Assume  $\psi_1, \psi_2$ are both automorphisms of $G$, in the commensurated-conjugacy class of $\phi$.

        If $\rho_{\psi_1} (G) = \rho_{\psi_2} (G)$, then  $\psi_1, \psi_2$ are   in the same $Aut(G)$-conjugacy class. 
    \end{lemma}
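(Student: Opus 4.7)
My plan is to construct an explicit automorphism $\sigma\in\Aut(G)$ conjugating $\psi_1$ to $\psi_2$ out of the identification $\rho_{\psi_1}(G)=\rho_{\psi_2}(G)$, exploiting that the stable letters $t_{\psi_1}$ and $t_{\psi_2}$ are sent to the same element of $PSL_2(\bbC)$.

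First I apply Lemma \ref{lem;reps_rho} twice, once to the pair $(\phi,\psi_1)$ and once to $(\phi,\psi_2)$. Having fixed the representation $\rho_\phi$, the lemma gives us $\rho_{\psi_1}$ and $\rho_{\psi_2}$ with $\rho_{\psi_1}(t_{\psi_1})=\rho_\phi(t_\phi)=\rho_{\psi_2}(t_{\psi_2})$. Call this common element $T\in PSL_2(\bbC)$, and write $K$ for the common image $\rho_{\psi_1}(G)=\rho_{\psi_2}(G)$. Since $t_{\psi_i}$ normalizes the fiber $G$ inside $\Gamma_{\psi_i}$, the element $T$ normalizes $K$, and conjugation by $T$ on $K$ corresponds, via $\rho_{\psi_i}$, to the automorphism $\psi_i$ on $G$.

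Next I define $\sigma:G\to G$ by $\sigma=(\rho_{\psi_2}|_G)^{-1}\circ\rho_{\psi_1}|_G$. Both restrictions are faithful with image $K$, so $\sigma$ is a well-defined element of $\Aut(G)$. For any $g\in G$, I compute
\[ \rho_{\psi_2}(\sigma(\psi_1(g))) = \rho_{\psi_1}(\psi_1(g)) = T\,\rho_{\psi_1}(g)\,T^{-1} = T\,\rho_{\psi_2}(\sigma(g))\,T^{-1} = \rho_{\psi_2}(\psi_2(\sigma(g))). \]
Using faithfulness of $\rho_{\psi_2}$ on $G$, I conclude $\sigma\circ\psi_1=\psi_2\circ\sigma$, that is, $\psi_1$ and $\psi_2$ are conjugate in $\Aut(G)$ via $\sigma$.

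The only nontrivial input is the careful use of the full statement of Lemma \ref{lem;reps_rho}: the fact that, for any commensurably conjugate automorphism $\psi$, one can \emph{simultaneously} arrange $\rho_\psi(t_\psi)=\rho_\phi(t_\phi)$ (and not merely up to conjugation). This is what lets the two stable letters match as elements of $PSL_2(\bbC)$, and this is also where Mostow rigidity (built into the lemma) is doing the real work. Once that is in place, the remaining argument is essentially a diagram chase, and I do not expect any serious obstacle.
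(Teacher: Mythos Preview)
Your proof is correct and follows essentially the same approach as the paper: both define the conjugating automorphism as (the restriction to $G$ of) $\rho_{\psi_2}^{-1}\circ\rho_{\psi_1}$ (or its inverse) and use that $\rho_{\psi_1}(t_{\psi_1})=\rho_{\psi_2}(t_{\psi_2})$ from Lemma~\ref{lem;reps_rho} to see that this automorphism intertwines $\psi_1$ and $\psi_2$. The paper's version is simply more terse, calling the verification ``tautological'' where you write out the chain of equalities explicitly.
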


    \begin{proof} 
     Since   $ \rho_{\psi_1} ( t_{\psi_1})= \rho_{\psi_2} ( t_{\psi_2})$, therefore 
        $\rho_{\psi_1}^{-1} \circ \rho_{\psi_2}$ is an automorphism of $G$ that tautologically conjugates the automorphism of $G$ given by $\rho_{\psi_1}^{-1} \circ  \ad_{\rho_{\psi_1} ( t_{\psi_1}) }    \circ \rho_{\psi_1}$ to  $\rho_{\psi_2}^{-1} \circ  \ad_{\rho_{\psi_1} ( t_{\psi_1}) }   \circ \rho_{\psi_2}$.
    \end{proof}

                The following lemma is not crucial, but we record it.
            
                \begin{lemma}\label{lem;equality_for_aut_conj}
                 
                   Assume  $\psi_1, \psi_2$ are both automorphisms of $G$, in the commensurated-conjugacy class of $\phi$.
                   If  $\psi_1, \psi_2$ are   in the same $Aut(G)$-conjugacy class of $\phi$, then  $\rho_{\psi_1} (G) =\rho_{\psi_2} (G)$.

                \end{lemma}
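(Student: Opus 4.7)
The plan is to leverage the uniqueness of $\rho_\psi$ in Lemma \ref{lem;reps_rho} by pulling $\rho_{\psi_2}$ back along a natural isomorphism $\Gamma_{\psi_1} \to \Gamma_{\psi_2}$, arguing that the resulting representation coincides with $\rho_{\psi_1}$ and in particular sends $G$ onto the same subgroup of $PSL_2(\bbC)$.

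First I write $\psi_2 = \beta^{-1}\psi_1\beta$ for some $\beta \in \Aut G$ and define $f : \Gamma_{\psi_1} \to \Gamma_{\psi_2}$ by $f(g) = \beta^{-1}(g)$ for $g \in G$ and $f(t_{\psi_1}) = t_{\psi_2}$. A direct verification of the semi-direct product relation shows $f$ is a group isomorphism, using $t_{\psi_2}\beta^{-1}(g) t_{\psi_2}^{-1} = \psi_2(\beta^{-1}(g)) = \beta^{-1}\psi_1(g) = f(\psi_1(g))$. The essential feature of $f$ is that $f(G) = G$ as a subset of $\Gamma_{\psi_2}$.

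Next I compare $\rho_{\psi_2}\circ f$ and $\rho_{\psi_1}$ as representations of $\Gamma_{\psi_1}$, invoking the uniqueness clause of Lemma \ref{lem;reps_rho}. Let $(H_1, \alpha_1)$ be commensuration data for $(\psi_1, \phi)$ underlying $\rho_{\psi_1}$, so $\rho_{\psi_1}(h) = \rho_\phi(\alpha_1(h))$ for $h \in H_1$ and $\rho_{\psi_1}(t_{\psi_1}) = \rho_\phi(t_\phi)$. Setting $\alpha_2 = \alpha_1\circ\beta$ on a sufficiently small finite index subgroup $H_2 \subset \beta^{-1}(H_1)$, the identity $\psi_2 = \beta^{-1}\psi_1\beta$ combined with $\psi_1|_{H_1} = \alpha_1^{-1}\phi\alpha_1|_{H_1}$ yields $\psi_2|_{H_2} = \alpha_2^{-1}\phi\alpha_2|_{H_2}$. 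So $(H_2,\alpha_2)$ is valid commensuration data for $(\psi_2,\phi)$, giving $\rho_{\psi_2}(h) = \rho_\phi(\alpha_2(h))$ for $h \in H_2$ and $\rho_{\psi_2}(t_{\psi_2}) = \rho_\phi(t_\phi)$. Hence for $h \in H_1 \cap \beta(H_2)$, a finite index subgroup, we compute
\[
(\rho_{\psi_2}\circ f)(h) = \rho_{\psi_2}(\beta^{-1}(h)) = \rho_\phi(\alpha_1\beta\beta^{-1}(h)) = \rho_\phi(\alpha_1(h)) = \rho_{\psi_1}(h),
\]
and $(\rho_{\psi_2}\circ f)(t_{\psi_1}) = \rho_{\psi_2}(t_{\psi_2}) = \rho_\phi(t_\phi) = \rho_{\psi_1}(t_{\psi_1})$. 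Thus $\rho_{\psi_2}\circ f$ satisfies the compatibility with $\rho_\phi$ characterizing $\rho_{\psi_1}$ in Lemma \ref{lem;reps_rho}, so by uniqueness $\rho_{\psi_2}\circ f = \rho_{\psi_1}$. Since $f(G) = G$, we conclude $\rho_{\psi_1}(G) = (\rho_{\psi_2}\circ f)(G) = \rho_{\psi_2}(G)$.

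The main obstacle is purely bookkeeping: ensuring that the finite index subgroups $H_1$, $H_2$, $\beta^{-1}(H_1)$, $\beta(H_2)$ and the relevant restrictions of $\phi$, $\psi_1$, $\psi_2$, $\alpha_1$, $\beta$ can be chosen compatibly so that $\alpha_2 = \alpha_1\circ\beta$ is well-defined and conjugates $\phi$ to $\psi_2$ on its domain. Because finite intersections of finite index subgroups remain of finite index, this presents no genuine difficulty, and one may freely pass to a smaller common finite index subgroup whenever convenient.
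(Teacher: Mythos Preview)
Your proof is correct and follows essentially the same route as the paper's. Both arguments construct the fiber-preserving isomorphism $\Gamma_{\psi_1}\to\Gamma_{\psi_2}$ coming from the conjugating automorphism $\beta$, transport one representation through it, and then invoke the uniqueness clause of Lemma~\ref{lem;reps_rho} to identify it with the other; your write-up simply makes explicit the commensuration data $(H_2,\alpha_2=\alpha_1\circ\beta)$ and the verification that the paper leaves to the reader.
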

                
                In particular, if $\psi$ is conjugate to $\phi$ by $\alpha \in \Aut G$,   on the entire   $G$,  then $ \rho_\psi = \rho_\phi \circ \alpha  $ on $G$, while  in comparison with the previous case,  the equality held only after restriction to $H$.
                
                \begin{proof} The groups $\Gamma_{\psi_1}$ and $\Gamma_{\psi_2}$ are isomorphic by an isomorphism that sends the fiber to the fiber and $t_{\psi_1}$ to $t_{\psi_2}$. Therefore, there is a representation $\rho'_{\psi_2}$ of $\Gamma_{\psi_2}$ that has the same image as $\rho_{\psi_1}$. Further, the fibers have same image, as do $t_{\psi_1},t_{\psi_2}$. Since $\rho_{\psi_1} (t_{\psi_1}) = \rho_\phi(t_\phi)$, this representation satisfies the properties required for $\rho_{\psi_2}$ (as defined in Lemma \ref{lem;reps_rho}). By uniqueness of $\rho_{\psi_2}$ from Lemma \ref{lem;reps_rho},  $\rho_{\psi_2} = \rho'_{\psi_2}$ and we have the result.
                \end{proof}

  Let  $\psi$ be in the commensurated-conjugacy class of $\phi$. The representation $\rho_\psi$  sends $G$ to  a subgroup of $\PSL_2(\bbC)$ that is normalized by $\rho_\phi(t_\phi)$ and intersects $\rho_\phi (G)$ in a common finite index subgroup. After passing to a further common finite index subgroup $Y$, we may assume that $Y$ is normal in the group generated by $\rho_\phi (G) \cup \rho_\psi (G)$, hence the quotient   $ \langle \rho_\phi (G) \cup \rho_\psi (G)\rangle/Y  $  is isomorphic to a quotient of the abstract free product   $  \rho_\phi (G)/Y  *  \rho_\psi (G)/Y$. In principle this could still be infinite. However, this is not the case, as we argue below {in Proposition~\ref{prop;fiber_thickens} and the first claim of
  Proposition~\ref{prop;simple_bound}}.

    Observe that the limit set of  $\rho_\phi(G)$ in the sphere at infinity is the whole sphere $\partial \bbH^3$, since it is a normal subgroup in a lattice. Since it has infinite index in the said lattice, it has infinite co-volume in its action on $\mathbb{H}^3$. We may apply  \cite[Theorem 1.1]{LLR}, or equivalently \cite[Theorem 3.5]{Mj},  to obtain that the $(\PSL_2\bbC)$-commensurator of $\rho_\phi(G)$ is a uniform lattice $\Lambda_\phi$ containing  $\rho_\phi(\Gamma_\phi)$.

    \begin{prop}\label{prop;fiber_thickens}
   
        For all $\psi$ in the commensurated-conjugacy class of $\phi$, the group 
        $ \langle \rho_\phi (G) \cup \rho_\psi (G) \cup \{ \rho_\phi (t_\phi)\}\rangle$ is a uniform lattice that surjects onto $\bbZ$ with kernel  $ \langle \rho_\phi (G) \cup \rho_\psi (G) \rangle$.
  
        In particular,  $ \langle \rho_\phi (G) \cup \rho_\psi (G) \rangle $ is virtually a hyperbolic surface group. 
    \end{prop}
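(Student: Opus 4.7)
The plan is to (i) place everything inside $\Lambda_\phi$, (ii) check that $N := \langle \rho_\phi(G) \cup \rho_\psi(G)\rangle$ is normal in $L := \langle \rho_\phi(G) \cup \rho_\psi(G) \cup \{\rho_\phi(t_\phi)\}\rangle$, and (iii) produce a homomorphism $\pi : L \to \mathbb{Z}$ vanishing on $N$ and nontrivial on $\rho_\phi(t_\phi)$.

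For (i), the subgroup $H$ from Lemma \ref{lem;the_good_subgroup_and_m}, being an intersection of an $\Aut G$-orbit, is characteristic in $G$. Hence each $\rho_\psi(a) \in \rho_\psi(G)$ normalizes $\rho_\psi(H) = \rho_\phi(H')$, a finite-index subgroup of $\rho_\phi(G)$, and therefore commensurates $\rho_\phi(G)$. This gives $\rho_\psi(G) \subseteq \Lambda_\phi$, hence $L \subseteq \Lambda_\phi$. Since $L$ contains the lattice $\rho_\phi(\Gamma_\phi)$, which has finite index in $\Lambda_\phi$, the group $L$ has finite index in $\Lambda_\phi$ and is itself a uniform lattice. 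Step (ii) is immediate: the element $\rho_\phi(t_\phi) = \rho_\psi(t_\psi)$ normalizes $G$ inside both $\Gamma_\phi$ and $\Gamma_\psi$, hence normalizes $\rho_\phi(G)$ and $\rho_\psi(G)$, while the generators of $N$ themselves tautologically normalize $N$.

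The heart of the argument is (iii). I would let $L_0$ denote the normal core in $L$ of $\rho_\phi(\Gamma_\phi) \cap \rho_\psi(\Gamma_\psi)$, a finite-index normal subgroup of $L$ contained in both fibered lattices, and set $K := L_0 \cap \rho_\phi(G)$. The first key observation is that $K = L_0 \cap \rho_\psi(G)$ as well: the two fibered projections $\rho_\phi(\Gamma_\phi) \twoheadrightarrow \mathbb{Z}$ and $\rho_\psi(\Gamma_\psi) \twoheadrightarrow \mathbb{Z}$ agree on the common finite-index subgroup $\langle \rho_\phi(H'), \rho_\phi(t_\phi)\rangle = \langle \rho_\psi(H), \rho_\psi(t_\psi)\rangle$, and hence on $L_0$ by a $\mathbb{Z}$-valued divisibility argument. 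Using this equality, every generator of $L$ normalizes $K$, so $K$ is normal in $L$. The resulting extension
\[
1 \to L_0/K \to L/K \to L/L_0 \to 1
\]
has infinite cyclic kernel and finite quotient; the conjugation action on $L_0/K \cong \mathbb{Z}$ is trivial, as can be verified directly inside each of $\rho_\phi(G) \rtimes \mathbb{Z}$ and $\rho_\psi(G) \rtimes \mathbb{Z}$. By Lyndon--Hochschild--Serre, a nonzero multiple of the generator of $H^1(L_0/K; \mathbb{Z})$ lifts to a class in $H^1(L/K; \mathbb{Z})$; pulling back yields $\pi: L \to \mathbb{Z}$ with $\pi(K) = 0$ and $\pi(\rho_\phi(t_\phi)) \neq 0$.

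Since $[\rho_\phi(G) : K]$ and $[\rho_\psi(G) : K]$ are finite and $\pi$ is $\mathbb{Z}$-valued, $\pi$ vanishes on $\rho_\phi(G)$ and $\rho_\psi(G)$, hence on $N$. Because $L/N$ is cyclic (generated by the image of $\rho_\phi(t_\phi)$) and has infinite image under $\pi$, the quotient $L/N$ is isomorphic to $\mathbb{Z}$ with kernel exactly $N$. For the final assertion, $L/\rho_\phi(G)$ is virtually $\mathbb{Z}$ (a finite extension of $\rho_\phi(\Gamma_\phi)/\rho_\phi(G)$), and the kernel $N/\rho_\phi(G)$ of the further surjection to $L/N \cong \mathbb{Z}$ is therefore finite, so $N$ is virtually a closed surface group. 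The main obstacle is step (iii): establishing triviality of the conjugation action and cohomologically extending the fibered class are the only nonroutine checks.
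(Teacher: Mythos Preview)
Your argument is correct apart from one slip in the last paragraph: you invoke ``$L/\rho_\phi(G)$,'' but $\rho_\phi(G)$ is not known to be normal in $L$---elements of $\rho_\psi(G)$ normalize only the finite-index subgroup $\rho_\phi(H')$, not necessarily all of $\rho_\phi(G)$. The fix is immediate: replace $\rho_\phi(G)$ by $K$, which you have already shown is normal in $L$. Since $L/K$ is a central extension of the finite group $L/L_0$ by $L_0/K \cong \bbZ$, it is virtually $\bbZ$, and the kernel $N/K$ of its surjection onto $L/N \cong \bbZ$ is therefore finite; hence $N$ contains the surface group $K$ with finite index.

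With that correction, your route differs from the paper's in two places, both to your advantage. The paper asserts in one line that ``the obvious map sending $\rho_\phi(t_\phi)$ to $1$ and $\rho_\phi(G) \cup \rho_\psi(G)$ to $0$ extends to a homomorphism to $\bbZ$''; your construction via the normal core $L_0$, the equality $L_0 \cap \rho_\phi(G) = L_0 \cap \rho_\psi(G)$, centrality of $L_0/K$ in $L/K$, and the transfer/Lyndon--Hochschild--Serre argument actually supplies the missing justification. For the final assertion, the paper appeals to Selberg's lemma together with the Tameness Theorem and Canary's Covering Theorem (or alternatively Stallings' fibration theorem), whereas your finite-index argument via $K$ is entirely elementary and bypasses that three-manifold machinery.
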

    
    \begin{proof}
        Every element of $\rho_\psi (G)$ is in the  $(\PSL_2\bbC)$-commensurator of $\rho_\phi(G)$ since the {latter} has a finite index subgroup which is a normal finite index subgroup of  $\rho_\psi (G)$. Therefore, $\rho_\psi (G) \subset \Lambda_\phi$, and $ \langle \rho_\phi (G) \cup \rho_\psi (G) \cup \{\rho_\phi (t_\phi)\}\rangle \subset \Lambda_\phi$. 
        Since it contains the uniform lattice $\rho_\phi (\Gamma_\phi)$, it is also a uniform lattice.  
   
        The obvious map sending $ \rho_\phi (t_\phi)$ to $1$ and  $  \rho_\phi (G) \cup \rho_\psi (G)$ to $0$ extends to a homomorphism $h$ to $\bbZ$. 
   
        By Selberg's lemma,     there is a torsion free finite index subgroup $F$ of   $\langle \rho_\phi (G) \cup \rho_\psi (G)\rangle$.  Observe, though it is not directly useful, that the finite index of $F$ is uniformly bounded, if Selberg's lemma is applied to $\Lambda_\phi$.    Considering the quotient of $\bbH^3$ by $F$, 
        the Tameness Theorem \cite{cg,agol}  and Canary's Covering Theorem \cite{canary-cover} imply that $F$ is a closed surface group.  
        Observe that the conclusion can be also drawn from Stallings' Theorem \cite{Stallings_fibration} applied to a torsion-free finite index subgroup of  $ \langle \rho_\phi (G) \cup \rho_\psi (G) \cup \{\rho_\phi (t_\phi)\}\rangle$, which projects onto $\bbZ$ with finitely generated kernel. 
            
        It follows that  $ \langle \rho_\phi (G) \cup \rho_\psi (G)\rangle $ is virtually   the fundamental group of a hyperbolic surface. {Further, $ \langle \rho_\phi (G) \cup \rho_\psi (G)\rangle $ is contained in a finite index supergroup $N$, the kernel of the homomorphism
        	$h$ constructed above. Hence the automorphism $ \ad_{\rho_\phi(t_\phi)} $ of $N$ extends both
       $\phi$ and $\psi$.} 
   
   \end{proof}

   \begin{prop}\label{prop;simple_bound}
        Given $\rho_\phi$, there is a  subgroup  $F_0$ of $\Lambda_\phi$ that contains $\rho_\phi(G)$ as a finite index subgroup that is   normalized by $\rho_\phi(t_\phi)$ and that contains all images $\rho_\psi(G)$ for $\psi$ in the commensurated conjugacy class of $\phi$.  
    
        The subgroup $F_0$ is  virtually a hyperbolic surface  group, and it contains finitely many  finite index subgroups isomorphic to $G$. 
    
        This number is an upper bound for the class number of $\phi$.
   
   \end{prop}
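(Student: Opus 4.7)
My plan is to take $F_0$ to be the subgroup of $\Lambda_\phi$ generated by all the images $\rho_\psi(G)$ as $\psi$ ranges over the commensurated-conjugacy class $[\phi]_{\mathrm{com}}$ of $\phi$, and then to bound the class number by the number of finite-index subgroups of $F_0$ isomorphic to $G$. The first step is to verify that $\rho_\phi(t_\phi)$ normalizes each $\rho_\psi(G)$: by Lemma~\ref{lem;reps_rho} one has $\rho_\phi(t_\phi)=\rho_\psi(t_\psi)$, and $t_\psi$ normalizes the fiber $G$ inside $\Gamma_\psi$. Hence $F_0^+:=F_0\cdot \langle \rho_\phi(t_\phi)\rangle$ is a subgroup of $\Lambda_\phi$ in which $F_0$ is normal.

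The second step is a chain-stabilization argument showing that $F_0^+$ is finitely generated. For each finite set $S\subset [\phi]_{\mathrm{com}}$, let
\[
L_S:=\langle \rho_\phi(G),\{\rho_\psi(G)\}_{\psi\in S},\rho_\phi(t_\phi)\rangle.
\]
The proof of Proposition~\ref{prop;fiber_thickens} applies verbatim with $S$ in place of a single $\psi$: each $\rho_\psi(G)$ lies in $\Lambda_\phi$ because it normalizes a finite-index subgroup it shares with $\rho_\phi(G)$, so $L_S\subset\Lambda_\phi$; since $L_S$ contains the uniform lattice $\rho_\phi(\Gamma_\phi)$ and is discrete (as a subgroup of the lattice $\Lambda_\phi$), $L_S$ is itself a uniform lattice. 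The family $\{L_S\}$ is increasing and the indices $[\Lambda_\phi:L_S]$ are bounded below by $1$ and above by the fixed number $[\Lambda_\phi:\rho_\phi(\Gamma_\phi)]$, so the chain must stabilize. Thus $F_0^+ = L_S$ for some finite $S$, and in particular $F_0^+$ is itself a uniform lattice.

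The third step identifies $F_0$ as the kernel of the natural surjection $F_0^+\to\bbZ$ that sends $\rho_\phi(t_\phi)\mapsto 1$ and every $\rho_\psi(G)\mapsto 0$. The Tameness plus Covering Theorem argument (or Stallings) used in the proof of Proposition~\ref{prop;fiber_thickens} then shows $F_0$ is virtually a closed hyperbolic surface group. Comparing with the short exact sequence $1\to\rho_\phi(G)\to\rho_\phi(\Gamma_\phi)\to\bbZ\to 1$, which sits inside the analogous sequence for $F_0^+$, yields $[F_0:\rho_\phi(G)]=[F_0^+:\rho_\phi(\Gamma_\phi)]<\infty$, so $\rho_\phi(G)$ has finite index in $F_0$.

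Finally, $F_0$ is finitely generated and has a well-defined rational Euler characteristic, so any subgroup isomorphic to $G$ has fixed finite index $\chi(G)/\chi(F_0)$ in $F_0$; as $F_0$ has only finitely many subgroups of any given finite index, only finitely many of these are isomorphic to $G$. By Lemma~\ref{lem;rhoG_equal_psi_conjugate}, the assignment $[\psi]_{\Aut G}\mapsto \rho_\psi(G)$ from $\Aut(G)$-conjugacy classes inside $[\phi]_{\mathrm{com}}$ to this finite set is injective, which gives the claimed class-number bound. The main subtlety is the stabilization step, whose essential input is that the commensurator $\Lambda_\phi$ is itself a single uniform lattice \cite{LLR,Mj}; without this the indices $[\Lambda_\phi:L_S]$ could not be bounded and the chain need not terminate.
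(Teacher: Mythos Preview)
Your proof is correct and follows essentially the same strategy as the paper: exploit discreteness of the commensurator $\Lambda_\phi$ to obtain a stabilizing chain, identify the resulting fiber $F_0$ as virtually a surface group via the argument of Proposition~\ref{prop;fiber_thickens}, and then invoke Lemma~\ref{lem;rhoG_equal_psi_conjugate} together with an Euler-characteristic count of finite-index subgroups. The only difference is organizational: you run the stabilization on the lattices $L_S$ sandwiched between $\rho_\phi(\Gamma_\phi)$ and $\Lambda_\phi$, whereas the paper runs it directly on the increasing chain of fiber groups $G_i=\langle G_{i-1},\rho_{\psi_i}(G)\rangle$ (terminating by decrease of genus), but these two chains correspond to one another via the kernel of the projection to~$\bbZ$.
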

   
   \begin{proof}
        Take the collection of all $\rho_\psi(G)$ for all possible representation $\psi$ as above commensurating $\rho_\phi(G)$. Index them over the integers, let $G_0= \rho_\phi(G)$ and for each $i$ let $G_i=\langle G_{i-1} \cup \rho_{\psi_i}(G)\rangle$. This sequence of subgroups of $\Lambda_\phi$ is a sequence of groups containing   $\rho_\phi(G)$ as a finite index subgroup, and normalized by $ \rho_\phi(t_\phi)$. The previous argument shows that, up to bounded finite index,  this corresponds to a sequence of decreasing finite covers of hyperbolic surfaces, and thus must terminate, by decrease of the genus. Since the index is uniformly bounded, the sequence is stationary. Let $F_0$ be the union of all $G_i$, thus the first part of the lemma holds.  Considering Euler characteristic, all finite index subgroups of $F_0$ that are isomorphic to $G$ have the same index, and the second part is just counting subgroups with a  given finite index.   The third part is a consequence of Lemma \ref{lem;rhoG_equal_psi_conjugate}. 
   \end{proof}

   \begin{lemma}\label{lem;2orbi}
        The group $F_0$ is a hyperbolic $2$-orbifold group. 
        
        More generally, anytime a group $F_0$ is a finitely generated kernel of a homomorphism to $\bbZ$ of a  uniform lattice in $\PSL_2(\bbC)$, it is a hyperbolic $2$-orbifold group.
   \end{lemma}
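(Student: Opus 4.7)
The plan is to reduce to a torsion-free finite-index subgroup where the fibration argument of Proposition~\ref{prop;fiber_thickens} applies directly, then promote the resulting surface-group structure to an orbifold structure on $F_0$ via Nielsen realization. Let $\Lambda\le PSL_2(\bbC)$ denote the ambient uniform lattice and $\phi:\Lambda\twoheadrightarrow\bbZ$ the surjection with $F_0=\ker\phi$. First I would apply Selberg's lemma to obtain a torsion-free normal subgroup $\Lambda_1\le\Lambda$ of finite index and set $N=F_0\cap\Lambda_1$. Then $N$ is finitely generated (as a finite index subgroup of the finitely generated group $F_0$), normal in $\Lambda$, of finite index in $F_0$, and equal to the kernel of $\phi|_{\Lambda_1}:\Lambda_1\to\bbZ$. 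Since $\Lambda_1$ is the fundamental group of a closed hyperbolic $3$-manifold, the Tameness Theorem \cite{cg,agol} together with Canary's Covering Theorem \cite{canary-cover}, or equivalently Stallings' fibration theorem \cite{Stallings_fibration} applied exactly as in the proof of Proposition~\ref{prop;fiber_thickens}, identifies $N$ with the fundamental group of a closed hyperbolic surface $\Sigma$ of genus $\ge 2$.

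Next I would upgrade the surface structure on $N$ to an orbifold structure on $F_0$. The finite quotient $Q:=F_0/N$ acts on $N$ by outer automorphisms, giving a homomorphism $\theta:Q\to\Out{N}\cong MCG^*(\Sigma)$. The map $\theta$ is injective: its kernel is the image in $Q$ of the centralizer of $N$ in $F_0$, which sits inside the centralizer of $N$ in $PSL_2(\bbC)$, and the latter is trivial since $N$ is a non-elementary discrete subgroup of $PSL_2(\bbC)$ (its limit set being all of $\partial\bbH^3$). Since $Z(N)=1$, the extension
\[ 1\to N\to F_0\to Q\to 1 \]
is determined up to equivalence by $\theta$ alone. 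By Kerckhoff's solution of the Nielsen realization problem (extended to finite subgroups of $MCG^*(\Sigma)$), there is a hyperbolic structure on $\Sigma$ on which $\theta(Q)$ acts by isometries. Lifting this action to $\bbH^2=\widetilde{\Sigma}$ produces a properly discontinuous cocompact isometric action of a group $\tilde H$ fitting in an extension $1\to N\to \tilde H\to Q\to 1$ realizing $\theta$. By the uniqueness of the extension, $\tilde H\cong F_0$, and hence $F_0$ is a hyperbolic $2$-orbifold group.

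The main delicate point is the passage from Kerckhoff's theorem, stated for finite subgroups of $MCG(\Sigma)$, to the setting of the extended mapping class group $MCG^*(\Sigma)$. This passage is standard: one can either pass to the index-$\le 2$ orientation-preserving part $Q\cap MCG(\Sigma)$, apply Kerckhoff, and then descend by realizing a single orientation-reversing involution as an isometry of a well-chosen hyperbolic structure, or invoke an orbifold version of Nielsen realization. A secondary verification is that the extension produced on the $\bbH^2$ side is really equivalent to $F_0$ and not merely abstractly isomorphic; this is where the triviality of $Z(N)$ is essential. An alternative, more direct route would invoke an orbifold version of Stallings' fibration theorem applied to the quotient $3$-orbifold $\bbH^3/\Lambda$, deducing that it fibers over $S^1$ with $F_0$ as the fiber $2$-orbifold group; this is cleaner but rests on a less commonly cited orbifold statement.
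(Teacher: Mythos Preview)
Your proof is correct and follows essentially the same strategy as the paper's: pass to a normal finite-index surface subgroup (you via Selberg on $\Lambda$ and intersecting with $F_0$; the paper by citing the previous proposition and passing to a normal core), then use injectivity of $Q\to\Out{N}$ via the triviality of the centralizer, apply Nielsen realization to get an isometric $Q$-action, and conclude by uniqueness of extensions with trivial center (the paper cites \cite[Coro.~IV(6.8)]{Brown} for this last step). The only cosmetic differences are your explicit construction of $N$ inside the torsion-free $\Lambda_1$ and your more detailed discussion of the orientation-reversing case; the underlying argument is the same.
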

   
   \begin{proof} As already mentioned, such a group $F_0$ contains as finite index subgroup, the fundamental group of a closed orientable surface of genus $\geq 2$.
   
        Therefore there exists   a finite index normal subgroup $K$ of $F_0$ that is a  hyperbolic surface group. One has an exact sequence \[1\to K \to F_0 \to Q \to 1 \] 
        where $Q$ is finite. Choosing a set-theoretic section $\xi: Q \to F_0$, we obtain a map $Q\to \Aut K$ by realizing each $\xi(q)$ by $\ad_{\xi(q)}$. This is not necessarily a homomorphism (since $\xi(q)$ could be changed by any element of $\xi(q)K$), but it  descends to a homomorphism $\varphi: Q\to \Out K$. 

        We first argue that this homomorphism is injective. If $q\in Q\setminus\{1\}$ is sent to the identity, then $\ad_{\xi(q)}$ equals  conjugation by some $k\in K$, hence $qk^{-1}$ is in the centralizer of $K$. However, as $K$ is Zariski dense in $\PSL_2(\bbC)$ (which has trivial center), it has trivial centralizer. 
   
        By the Dehn-Nielsen-Baer theorem,  $\Out K$ is isomorphic to the extended mapping class group of the surface $\Sigma$ of which $K$ is the fundamental group. By the Nielsen realization theorem \cite{Kerck},  there is a hyperbolic metric on $\Sigma$, a subgroup $S$ of the finite group of (metric) symmetries of $S$,   an identification $ K \stackrel{\simeq}{\to} \pi_1(\Sigma, v) $ (for a base point $v \in \Sigma$ with trivial $S$-stabilizer), and an isomorphism     $\varphi(Q) \stackrel{\sim}{\to} S$  in a way that the action of  $S $  by outer-automorphisms on $\pi_1(\Sigma, v)$ is the equivariant image of the action of $\varphi (Q)$ by outer-automorphisms on $K$. 
   
       Consider now the orbifold quotient $\overline \Sigma$ of $\Sigma$ under the action of $S$. We claim that its orbifold fundamental group (with base point $ \bar v$)
        is isomorphic to the extension $F_0$.
        Indeed, it is also a group extension of the form 
            \[1\to     \pi_1(\Sigma) \to \pi_1(\overline \Sigma) \to S \to 1\] 
        with the same representation $S\to \Out{ \pi_1(\Sigma)} $ as $\varphi$ for $F_0$. Since the center of $K$ is trivial,  the classification of extensions (see for instance \cite[Coro. IV(6.8)]{Brown})  
        indicates that there is only one extension of $K$ by $Q$, realizing $\varphi$, up to equivalence. Therefore $\pi_1(\overline \Sigma, \bar v) \simeq F_0$, and $F_0$ is a $2$-orbifold group.
   \end{proof}
   
    Being commensurable with a hyperbolic surface group, the orbifold of which $F_0$ is the fundamental group  is an orbifold that carries a hyperbolic metric. It must be of area greater than that of the smallest orbifold (the quotient of $\bbH^2$ by the the Coxeter triangle group $(2,3,7)$),
    which is $\pi/42$. On the other hand, if $\chi(G)$ is the Euler characteristic of $G$, it is the fundamental group of a hyperbolic surface of area $-2\pi\chi(G)$. We record this in the following.

   \begin{coro} \label{coro;1}
        The index of subgroups of $F_0$ that are of finite index and isomorphic to $G$ is bounded above by  $-84\chi(G)$, or $168 (g-1)$. Here $g$ is the genus of a surface $\Sigma$ with   fundamental group $G$. 
   \end{coro}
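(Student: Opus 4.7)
The plan is to convert the area considerations stated immediately before the corollary into a numerical index bound via the orbifold covering formula. By Lemma \ref{lem;2orbi}, $F_0$ is the fundamental group of some closed hyperbolic $2$-orbifold $\mathcal{O}$, so that $\mathrm{area}(\mathcal{O}) \geq \pi/42$ (the Siegel bound, attained by the quotient of $\mathbb{H}^2$ by the $(2,3,7)$-Coxeter triangle group). On the other hand, $G$ is the fundamental group of a closed hyperbolic surface $\Sigma$ of genus $g$, so by Gauss--Bonnet
\[ \mathrm{area}(\Sigma) \;=\; -2\pi\chi(G) \;=\; 4\pi(g-1). \]

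Now I would invoke the standard fact that a finite-index subgroup $G \leq F_0$ of index $d$ corresponds to a $d$-sheeted orbifold cover $\Sigma \to \mathcal{O}$, under which hyperbolic area is multiplied by $d$; here the cover happens to be a manifold because $G$ is a surface group, but this does not affect the multiplicativity of area. Thus
\[ 4\pi(g-1) \;=\; \mathrm{area}(\Sigma) \;=\; d\cdot \mathrm{area}(\mathcal{O}) \;\geq\; d\cdot \frac{\pi}{42}, \]
which rearranges to $d \leq 168(g-1) = -84\chi(G)$, yielding both forms of the stated bound.

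I do not anticipate a real obstacle: the corollary is essentially a direct arithmetic translation of the two facts already recorded (Siegel's lower bound on $\mathrm{area}(\mathcal{O})$ and Gauss--Bonnet for $\Sigma$) together with the multiplicativity of area in finite orbifold covers. The only mild point to verify is that the hyperbolic structure on $\mathcal{O}$ underlying Lemma \ref{lem;2orbi} is the one compatible, via the cover, with the hyperbolic structure on $\Sigma$ coming from $G$, so that the ratio of areas is genuinely the index $d$; this is automatic once one fixes a discrete faithful representation of $F_0$ into $\mathrm{Isom}(\mathbb{H}^2)$ and pulls back the metric to $\Sigma$.
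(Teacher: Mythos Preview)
Your proof is correct and follows essentially the same approach as the paper: the paragraph immediately preceding the corollary already records the two inputs (Siegel's lower bound $\pi/42$ for the orbifold area and the Gauss--Bonnet area $-2\pi\chi(G)$ for $\Sigma$), and the corollary is obtained, exactly as you do, by dividing and using multiplicativity of hyperbolic area under finite orbifold covers.
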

   
   \begin{lemma}\label{lem;rank}
       If $g$ is the genus of a surface $\Sigma$ with   fundamental group $G$,  then the rank of $F_0$ is at most $2g$. 
   \end{lemma}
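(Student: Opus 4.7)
Plan: I would begin by invoking Lemma \ref{lem;2orbi} to identify $F_0$ with $\pi_1^{\mathrm{orb}}(\overline{\Sigma})$ for a closed hyperbolic 2-orbifold $\overline{\Sigma}$; the finite-index inclusion $G\leq F_0$ then corresponds to a regular orbifold cover $\Sigma\to\overline{\Sigma}$ of degree $n=[F_0:G]$ by the genus-$g$ closed orientable surface. I would treat the orientable case of $\overline{\Sigma}$ in detail; the non-orientable case goes through by an analogous count with crosscap generators in place of handle generators.

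Writing the signature of $\overline{\Sigma}$ as $(\bar g; m_1,\ldots,m_k)$, the standard orbifold presentation
\[
F_0 \;=\; \langle a_1,b_1,\ldots,a_{\bar g},b_{\bar g},x_1,\ldots,x_k \;\mid\; x_i^{m_i},\ \textstyle\prod_{i}[a_i,b_i]\prod_{j} x_j\rangle
\]
allows elimination of one generator via the long relation when $k\geq 1$, yielding $\mathrm{rank}(F_0)\leq 2\bar g+k-1$ (and $\mathrm{rank}(F_0)=2\bar g$ when $k=0$). I would then apply the Riemann--Hurwitz identity
\[
2g-2 \;=\; n\Bigl((2\bar g-2)+\textstyle\sum_{i=1}^k(1-1/m_i)\Bigr)
\]
together with the lower bound $1-1/m_i\geq 1/2$. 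A short case analysis on $(n,\bar g)$ shows $2\bar g+k-1\leq 2g$ in every case, with one extremal exception: $n=2$, $\bar g=0$, and all $m_i=2$, i.e.\ the signature $(S^2;2^{2g+2})$, where the estimate falls short by exactly one.

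The main obstacle is ruling out this exceptional signature in our setup. I would do so by a direct count: $\pi_1^{\mathrm{orb}}(S^2;2^{2g+2})$ has abelianization $(\bbZ/2)^{2g+1}$, and a straightforward enumeration of the surjections onto $\bbZ/2$ shows exactly one of them has torsion-free kernel, namely the map sending every $x_i$ to $1$. Consequently $F_0$ contains a unique torsion-free subgroup of index $2$ isomorphic to $G$. By Proposition \ref{prop;simple_bound}, every $\rho_\psi(G)\subseteq F_0$ is a torsion-free finite-index subgroup of $F_0$ isomorphic to $G$ of that same index $n=2$, so in the exceptional case all of them must coincide with $\rho_\phi(G)$; this forces $F_0=\rho_\phi(G)$, whose rank is exactly $2g$. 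Combined with the generic Riemann--Hurwitz case, this yields $\mathrm{rank}(F_0)\leq 2g$.
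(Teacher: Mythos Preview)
Your argument is correct, and it shares the paper's skeleton (bound the number of generators of the orbifold presentation, then feed in Riemann--Hurwitz), but the bookkeeping is genuinely different. The paper uses the coarser bound $r\le r_t+|\calO_{sing}|$ (no generator eliminated) and aims for a single chain of inequalities ending with $r\le 2+2(g-1)/N\le 2g$, avoiding any case split. You instead use the sharper bound $2\bar g+k-1$, which forces a case analysis and exposes the signature $(S^2;2^{2g+2})$ as a genuine obstruction: that orbifold group really does have rank $2g+1$ (its abelianization is $(\bbZ/2)^{2g+1}$), so the lemma is \emph{false} for an arbitrary $2$-orbifold group containing $G$ with finite index. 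Your way around this---observing that this group has a \emph{unique} torsion-free index-$2$ subgroup, so the $\rho_\psi(G)$'s that generate $F_0$ would all coincide and $F_0$ would collapse to $\rho_\phi(G)$---is exactly right and uses the specific construction of $F_0$ in Proposition~\ref{prop;simple_bound}. This is a virtue: it makes transparent why the statement is about \emph{this} $F_0$ and not about arbitrary over-orbifolds.

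Two small remarks. First, you call the cover $\Sigma\to\overline\Sigma$ regular, but $G$ is not known to be normal in $F_0$; fortunately you never use regularity, since the multiplicativity $\chi(\Sigma)=n\,\chi^{\mathrm{orb}}(\overline\Sigma)$ holds for any finite orbifold cover. Second, your non-orientable sketch does go through: with $c$ crosscaps the bound is $c+k-1$ (or $c$ if $k=0$), and the same Riemann--Hurwitz count closes without any exceptional signature (the borderline case $c=1$, $n=2$, all $m_i=2$ gives $k=2g$ and rank bound exactly $2g$).
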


    \begin{proof}
    The group $F_0$ is fundamental group of an orbifold $\calO$ of which the surface $\Sigma$ is a ramified cover. Let $\calO_{top}$ be the  underlying topological surface of $\calO$, and $\calO_{sing}$ the collection of singularities of $\calO$ on $\calO_{top}$. For $p$ a singularity let $e_p$ denote the order of its isotropy group.  Classical presentations of $2$-orbifolds show that the   rank of $F_0$ is at most the rank of the fundamental group of $\calO_{top}$  plus the cardinality of $\calO_{sing}$.  The surface $\Sigma$ is a branched cover of $\calO_{top}$. Let $N$ be the degree of the cover.  The Riemann-Hurwitz formula for branched covers of (topological) surfaces gives   
    $\chi(\Sigma) = N\chi(\calO_{top})-\sum_{\calO_{sing}} (e_p-1)$. Writing  the rank of the (classical) fundamental group of $\calO_{top}$ as $r_t$, one has $\chi(\Sigma)=N(2-r_t) - \sum(e_p-1)$. The rank $r$ of the orbifold group $F_0$ hence satisfies  
    $Nr\leq 2N - \chi(\Sigma) -N\sum(e_p-1) +N|\calO_{sing}| \leq 2N - \chi(\Sigma)$. Finally $r\leq 2+ 2 (g-1)/N =  \frac{2g}{N}+2-\frac{2}{N}$. This in turn  is less than $2g$ for $g\geq 2, N\geq 1$.   
    \end{proof}
    
    \begin{coro}\label{coro;2}
        The number of subgroups of index $k$ in $F_0$ is at most $(k!)^{2g}$.
    \end{coro}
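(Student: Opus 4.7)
The plan is to deduce this from Lemma \ref{lem;rank} via the classical Hall-type enumeration of finite-index subgroups of a finitely generated group.

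First, I would fix a generating set $\{x_1, \ldots, x_r\}$ of $F_0$ with $r \leq 2g$, which is provided by Lemma \ref{lem;rank}. To each subgroup $H \leq F_0$ of index $k$, I associate a coset action as follows: enumerate the left cosets $F_0/H$ as $\{c_1, \ldots, c_k\}$ with $c_1 = H$, and let $\varphi_H : F_0 \to S_k$ be the permutation representation of $F_0$ acting by left multiplication on this indexed set.

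Next, I would observe that the assignment $H \mapsto \varphi_H$ (for some choice of labeling) is injective into the set of homomorphisms $F_0 \to S_k$. Indeed, from $\varphi_H$ one recovers $H$ as $\varphi_H^{-1}(\mathrm{Stab}_{S_k}(1))$, since $g \in F_0$ fixes the coset labeled $1$ (namely $H$) under left multiplication precisely when $g \in H$. Hence distinct subgroups of index $k$ give rise to distinct homomorphisms.

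Finally, since any homomorphism from $F_0$ to $S_k$ is determined by the images of the $r \leq 2g$ chosen generators, there are at most $|S_k|^{2g} = (k!)^{2g}$ such homomorphisms. Combining this with the injection above yields the bound. No step should present any real obstacle: the only point that deserves care is the explicit verification that $H = \varphi_H^{-1}(\mathrm{Stab}_{S_k}(1))$, which ensures the injectivity of the assignment and is what makes the crude counting bound $(k!)^{2g}$ on the homomorphism set transfer to the same bound on the set of index-$k$ subgroups.
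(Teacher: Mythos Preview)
Your proposal is correct and follows essentially the same approach as the paper: both arguments bound the number of index-$k$ subgroups by the number of homomorphisms $F_0 \to \mathfrak{S}_k$ via the coset permutation action, and then invoke Lemma~\ref{lem;rank} to bound that number by $(k!)^{2g}$. The only cosmetic difference is that you phrase the key step as an injection from index-$k$ subgroups into $\mathrm{Hom}(F_0,\mathfrak{S}_k)$, while the paper phrases it as a surjection in the other direction.
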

    \begin{proof} For any index $k$ subgroup, $F_0$ acts by permutation on its $k$ cosets. Therefore,  the map $Hom(F_0,  \frakS_k) \to \{H, H<F_0\}$ that assigns to every $\pi: F_0\to \frakS_k $  the stabilizer of $\{1\}$ of the action,    surjects $Hom(F_0,  \frakS_k)$  onto the set of index $\leq k$ subgroups of $F_0$.    There are at most $(k!)^{rank(F_0)}$ such homomorphisms. By the previous Lemma \ref{lem;rank}, we obtain the desired bound. 
    \end{proof}

    A  slightly better formula for surfaces was actually given by A.D. Mednykh, \cite[Thm. 14.4.1]{LS}.   
   
   We thus obtain a proof of Theorem \ref{theo;intro}.
   
   \begin{proof}[Proof of Theorem \ref{theo;intro}]
 	Consider $\Sigma_g$ a genus $g\geq 2$ surface, and  $\phi$ a pseudo-Anosov automorphism of $G=\pi_1(\Sigma_g)$. Let $\rho_\phi$ a discrete faithful representation of $\Gamma_\phi= \pi_1(\Sigma_g) \rtimes_\phi \mathbb{Z}$ in $PSL_2(\mathbb{C})$.  Let $F_0$ given by  Proposition \ref{prop;simple_bound}. By  Corollary  \ref{coro;1}, the index of $F_0$ in all the images of $G$ by the representations $\rho_\psi$ defined before  Proposition \ref{prop;simple_bound}, for $\psi \in \Aut{G}$ in the commensurated conjugacy class of $\phi$ is bounded by $168(g-1)$. By   Corollary  \ref{coro;1}, the number of images of $G$ by the maps  $\rho_\psi$ is bounded by  $((168(g-1))!)^{2g}$. Finally, by Lemma \ref{lem;rhoG_equal_psi_conjugate}, 
 	this number is an upper bound for the number of $\Aut{G}$-conjugacy classes in the set 
 	of all $\psi \in \Aut{G}$ that are commensurably conjugate to $\phi$.
   \end{proof}

   However huge,  this bound is nonetheless independent of $\phi$, which was perhaps unexpected.  Hence, the bound obtained in Proposition \ref{prop;simple_bound} is  actually uniform over the elements of $\Aut G$ that define pseudo-Anosov mapping classes of the surface of which $G$ is fundamental group.

\section{Recipe for commensurably-conjugate mapping classes}
    
    In this Section we propose a recipe to construct examples, as suggested by the structure obtained in Proposition \ref{prop;simple_bound}. We will also produce an explicit example in Subsection \ref{subsec;ex}, thus proving Theorem \ref{theo;example}.

    Start with a hyperbolic surface, or $2$-orbifold $\Sigma$ with a base point $v$. Consider a single pseudo-Anosov mapping class $[\Phi]$, and realize it as an automorphism $\Phi$ of $\pi_1(\Sigma, v)$. 

    Consider two finite sheeted characteristic covers that are of the same degree but  do not correspond to subgroups in the same orbit under the automorphism group of $\pi_1(\Sigma, v)$. For instance, one can be an abelian  cover while the other is not.
    In fact, it is not necessary that the covers are characteristic, they only need to have fundamental groups preserved by $\Phi$. It could thus be any  cover of degree $k$, up to replacing $\Phi$ by $\Phi^{k!}$ (which   preserves all subgroups of index $k$). 
    
    Let $G_1, G_2$ be the subgroups of $\pi_1(\Sigma, v)$ that are fundamental groups of the respective covers $(\Sigma_1, v_1)$ and $(\Sigma_2, v_2)$. 
    
    The automorphism $\Phi$ {induces} $\phi$ and $\phi'$, automorphisms of subgroups $G_1, G_2$ {respectively}.  
Moving up to a common cover of $\Sigma_1, \Sigma_2$, corresponding to the intersection of the subgroups $G_1, G_2$, (which is preserved by $\Phi$), we obtain a  subgroup on which $\phi, \phi'$ coincide. 
Turn $\phi'$ into an automorphism $\psi$ of $G_1$ by picking an isomorphism $\alpha: G_1\to G_2$.

    \begin{claim}
        The automorphisms $\phi$ and $\psi$ of the group $G_1$  are in the same commensurated-conjugacy class in $\Comm {G_1}$.
    \end{claim}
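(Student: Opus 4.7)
The plan is to explicitly exhibit a commensurator element of $G_1$ that conjugates $\phi$ to $\psi$, using the isomorphism $\alpha : G_1 \to G_2$ itself as the witness. The essential ingredient is that $\phi = \Phi|_{G_1}$ and $\phi' = \Phi|_{G_2}$ are restrictions of the same global automorphism $\Phi$ of $\pi_1(\Sigma, v)$, so they must agree on the intersection $G_1 \cap G_2$ wherever both are simultaneously defined.

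Concretely, I would first observe that $G_1 \cap G_2$ has finite index in each of $G_1$ and $G_2$, since both are of finite index in $\pi_1(\Sigma, v)$, and that it is $\Phi$-invariant, so $\phi$ and $\phi'$ restrict to the same automorphism on it. Next, set $H := \alpha^{-1}(G_1 \cap G_2)$, a finite index subgroup of $G_1$. Then $\alpha|_H : H \to G_1 \cap G_2$ is an isomorphism between two finite index subgroups of $G_1$, and hence determines a class $[\alpha] \in \Comm{G_1}$.

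To verify that $[\psi] = [\alpha]^{-1}[\phi][\alpha]$ in $\Comm{G_1}$, I would pass to a further finite index subgroup $K \subset H$ chosen small enough that both $\alpha(K) \subset G_1 \cap G_2$ and $\phi(\alpha(K)) \subset G_2$ hold; such a $K$ exists by intersecting finitely many preimages under the relevant maps. For $g \in K$ one then has
\[ \alpha^{-1} \circ \phi \circ \alpha \,(g) \;=\; \alpha^{-1} \circ \phi' \circ \alpha \,(g) \;=\; \psi(g), \]
where the first equality uses the agreement of $\phi$ and $\phi'$ on $G_1 \cap G_2$, and the second is the defining formula for $\psi$. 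Thus $[\alpha]^{-1}[\phi][\alpha]$ and $[\psi]$ agree on the finite index subgroup $K$, so they represent the same class in $\Comm{G_1}$.

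The second part of the claim is deliberately tentative: the word ``perhaps'' signals that non-conjugacy in $\Aut{G_1}$ is not asserted in general but is the phenomenon the recipe is engineered to enable. The point is that $\alpha$ a priori lives only in $\Comm{G_1}$ and need not extend to any element of $\Aut{G_1}$; the hypothesis that the subgroups $G_1$ and $G_2$ are not in the same $\Aut{\pi_1(\Sigma,v)}$-orbit is precisely what is designed to obstruct such an extension. Actually certifying non-conjugacy in a specific instance --- using, for example, the induced action on the abelianization as in the explicit example that follows --- is expected to be the main obstacle, but it lies outside the content of this claim.
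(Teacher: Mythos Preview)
Your argument is correct and follows essentially the same route as the paper: exhibit $\alpha$ (restricted to a finite index subgroup of $G_1$) as the commensurating element, and use that $\phi$ and $\phi'$ agree on $G_1\cap G_2$ since both are restrictions of $\Phi$. The paper phrases it from the other side, writing $H=G_1\cap G_2$ and checking $\phi|_H=\alpha\circ\psi\circ\alpha^{-1}|_H$, but this is the same identity read backwards.

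One small redundancy: your passage to a smaller $K\subset H$ is unnecessary. Since $K\subset H=\alpha^{-1}(G_1\cap G_2)$, the condition $\alpha(K)\subset G_1\cap G_2$ is automatic; and since $\Phi$ preserves $G_2$, for any $g\in H$ one already has $\phi(\alpha(g))=\Phi(\alpha(g))\in G_2$, so the second condition is automatic too. The computation $\alpha^{-1}\circ\phi\circ\alpha(g)=\alpha^{-1}\circ\phi'\circ\alpha(g)=\psi(g)$ holds on all of $H$. Your treatment of the ``perhaps'' clause matches the paper's intent exactly.
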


    Write $H = G_1 \cap G_2$.   
    On $H$, the restrictions of $\phi$ and $\phi'$ coincide.   Since $\alpha \circ  \psi \circ \alpha^{-1} = \phi'$ on $G_2$, we have that $\phi|_H = \alpha \circ  \psi \circ \alpha^{-1}|_H$.  
   We thus have $\phi, \psi$ commensurably conjugated by $\alpha^{-1}$ on $H$.   
 It is not clear   what conditions would ensure that they are not conjugate in $\Aut{G_1}$, even though one might suspect that this happens somewhat generically:  any such conjugation would be a hidden symmetry for the mapping class of $\phi$ on its cover $\Sigma_1$, because it can then descend as $[\Phi]$  to $\Sigma$ through two different covering maps.

        \subsection{Explicit example}\label{subsec;ex}

        While it appears likely that a construction as above would give examples of non-conjugated automorphisms, an actual explicit example is necessary to settle the issue.
        
        \begin{defn}{
        Let $A \in \GL_n(\mathbb{R})$. If $(A-I)$ as an endomorphism
        of $\mathbb{R}^n$ has rank $r < n$, we say that $A$ is \emph{the identity plus a rank $r$ endomorphism}.}
        \end{defn}

         Our example below will be 
        a construction of  two mapping classes on a surface of genus three. 
        
        \begin{prop}
            Let $S$ be a closed orientable surface of genus three, and $p\in S$. There exist two pseudo-Anosov mapping classes of $S$ that are realized by automorphisms of the fundamental group $\pi_1(S, p)$ that are commensurably conjugate, but  are not conjugate in   $MCG(S)$. 
        \end{prop}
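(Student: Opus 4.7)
The plan is to apply the recipe of the previous subsection to an explicit base $2$-orbifold $\Sigma$ and pseudo-Anosov $\Phi$. Commensurable conjugacy of the resulting automorphisms $\phi, \psi$ of $G_1 \cong \pi_1(S, p)$ will follow immediately from the preceding claim, so all the work lies in certifying that $\phi$ and $\psi$ are not conjugate in $MCG(S)$.

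For the base, I first note that $\Sigma = S_2$ itself is inadequate: $MCG(S_2)$ surjects onto $Sp(4, \bbF_2)$, which acts transitively on $\bbF_2^4 \setminus \{0\}$, so all index-$2$ subgroups of $\pi_1(S_2)$ are $\Aut{\pi_1(S_2)}$-equivalent. I would instead take $\Sigma$ to be the torus equipped with four cone points of order $2$, an orbifold of Euler characteristic $-2$ whose smooth branched double covers are closed surfaces of genus three. Its orbifold fundamental group has abelianization $\bbZ^2 \oplus (\bbZ/2)^3$, and the smooth genus-$3$ double covers correspond exactly to the $4$ characters $\pi_1(\Sigma) \to \bbZ/2$ that are nontrivial on every cone-point loop, parametrized by a vector in $\bbF_2^2$ coming from the torus. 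Under the orbifold mapping class group (acting as $SL(2,\bbF_2)$ on $\bbF_2^2$) these $4$ covers split into two $\Aut{\pi_1(\Sigma)}$-orbits, and I would pick $G_1, G_2 \subset \pi_1(\Sigma, v)$ in distinct orbits. Then I would take $\Phi = \Phi_0^m$ for a pseudo-Anosov $\Phi_0$ on $\Sigma$ (e.g.\ induced by a hyperbolic element of $SL(2,\bbZ)$ on the pillowcase orbifold) and a power $m$ large enough that both $G_1, G_2$ are $\Phi$-invariant, as the recipe allows.

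To prove non-conjugacy in $MCG(S_3) = \Out{G_1}$, I use the induced action on homology: inner automorphisms act trivially on $H_1$, so any conjugacy between $\phi$ and $\psi$ in $\Out{G_1}$ would descend to a $GL_6(\bbZ)$-conjugacy between the induced $\phi_*, \psi_* \in GL(H_1(G_1; \bbZ))$, a fortiori a $GL_6(\bbQ)$-conjugacy. By construction, $\psi_*$ is $GL_6(\bbQ)$-conjugate (via $\alpha_*$) to $\Phi_*$ acting on $H_1(\Sigma_2; \bbQ)$, while $\phi_*$ is $\Phi_*$ on $H_1(\Sigma_1; \bbQ)$. For each $i$, the deck involution splits $H_1(\Sigma_i; \bbQ) = H_1^+ \oplus H_1^{-,(i)}$ into $\Phi_*$-invariant pieces: the $+$-part $H_1^+ \cong H_1(\Sigma; \bbQ) \cong \bbQ^2$ is common to both sides, while $H_1^{-,(i)}$ is the $\chi_i$-twisted homology of $\Sigma$, of rank $4$.

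The main obstacle is the explicit search: I must exhibit $\Phi_0$ for which the characteristic polynomial of $\Phi_*$ on $H_1^{-,(1)}$ differs from that on $H_1^{-,(2)}$ (or at least the two have distinct rational canonical forms). Since $\chi_1$ restricts trivially to the $\bbZ^2$-summand of the abelianization while $\chi_2$ does not, the two twisted homology modules carry structurally different $\Phi_*$-dynamics; a direct $6 \times 6$ rational matrix computation for a suitable $\Phi_0$ will produce the discrepancy and complete the proof.
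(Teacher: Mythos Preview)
Your dismissal of $\Sigma = S_2$ is the central gap, and the inference behind it is faulty. You argue that since $Sp(4,\bbF_2)$ acts transitively on $\bbF_2^4\setminus\{0\}$, all index-$2$ subgroups of $\pi_1(S_2)$ lie in one $\Aut{\pi_1(S_2)}$-orbit, and therefore the recipe cannot produce non-conjugate lifts. But this does not follow: if $\beta\in\Aut{\pi_1(S_2)}$ carries $G_1$ to $G_2$, then $\Phi|_{G_2}$ pulled back along $\beta$ is $(\beta^{-1}\Phi\beta)|_{G_1}$, and there is no reason for this to be $\Aut{G_1}$-conjugate to $\Phi|_{G_1}$, since the conjugator $\beta$ does not preserve $G_1$. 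The recipe only ever claimed ``perhaps not conjugate''; the different-orbit condition is a heuristic, not a requirement. The paper in fact takes $\Sigma = S_2$ with two index-$2$ covers $\Sigma_1,\Sigma_2$ and it works.

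The paper's distinguishing invariant is simpler and sharper than your characteristic-polynomial test on $H_1^{-}$. The pseudo-Anosov is a Penner--Thurston product $\tau_\mu^n\tau_\lambda^{2m}$ with $\mu$ separating (hence homologically trivial on every cover) and $\lambda$ an explicit non-separating curve chosen so that its preimage in $\Sigma_2$ is a single simple closed curve while in $\Sigma_1$ it is two disjoint simple closed curves. Consequently the lift acts on $H_1(\Sigma_2;\bbZ)$ as $I+(\text{rank }1)$, whereas on $H_1(\Sigma_1;\bbZ)$ an explicit $6\times 6$ matrix computation shows it acts as $I+(\text{rank }2)$. The rank of $M-I$ is a $GL_6(\bbZ)$-conjugacy invariant, so the two lifts cannot be conjugate in $MCG(S_3)$.

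Your alternative via the torus with four order-$2$ cone points may be workable, but as written it is not a proof: you assert that ``a direct $6\times 6$ rational matrix computation \dots\ will produce the discrepancy'' without exhibiting $\Phi_0$ or the matrices, and your parenthetical source for $\Phi_0$ is confused --- the pillowcase is the \emph{sphere} with four order-$2$ points, a Euclidean orbifold with $\chi_{\mathrm{orb}}=0$, not your hyperbolic torus-with-four-cone-points with $\chi_{\mathrm{orb}}=-2$, so no pseudo-Anosov on your $\Sigma$ arises from $SL(2,\bbZ)$ in the way you suggest. Until an explicit $\Phi_0$ is chosen and the twisted-homology actions are actually computed and shown to differ, the argument is incomplete.
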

  
        In particular they are not conjugate in $Aut(\pi_1(S,p)$.
        
        \begin{proof}
         Consider $\Sigma$ a genus $2$ surface, with base point $v$, and two  curves that fill $\Sigma$, $\mu, \lambda$, with $\mu$ separating. Writing $\pi_1(\Sigma, v) =\langle \alpha, \eta, \gamma, \delta \, |\,  \alpha \beta \alpha^{-1} \beta^{-1}  = \gamma^{-1}\delta \gamma \delta^{-1} \rangle  $,  we specify  the curve $\mu$ to be the commutator and for the sake of being explicit, $\lambda = \gamma\beta\delta\gamma\beta\alpha^{-1}\delta\gamma\beta$, see Figure \ref{fig;lambda}.
         
        \begin{figure}[ht]
            \centering
                     \includegraphics[width=0.6\textwidth]{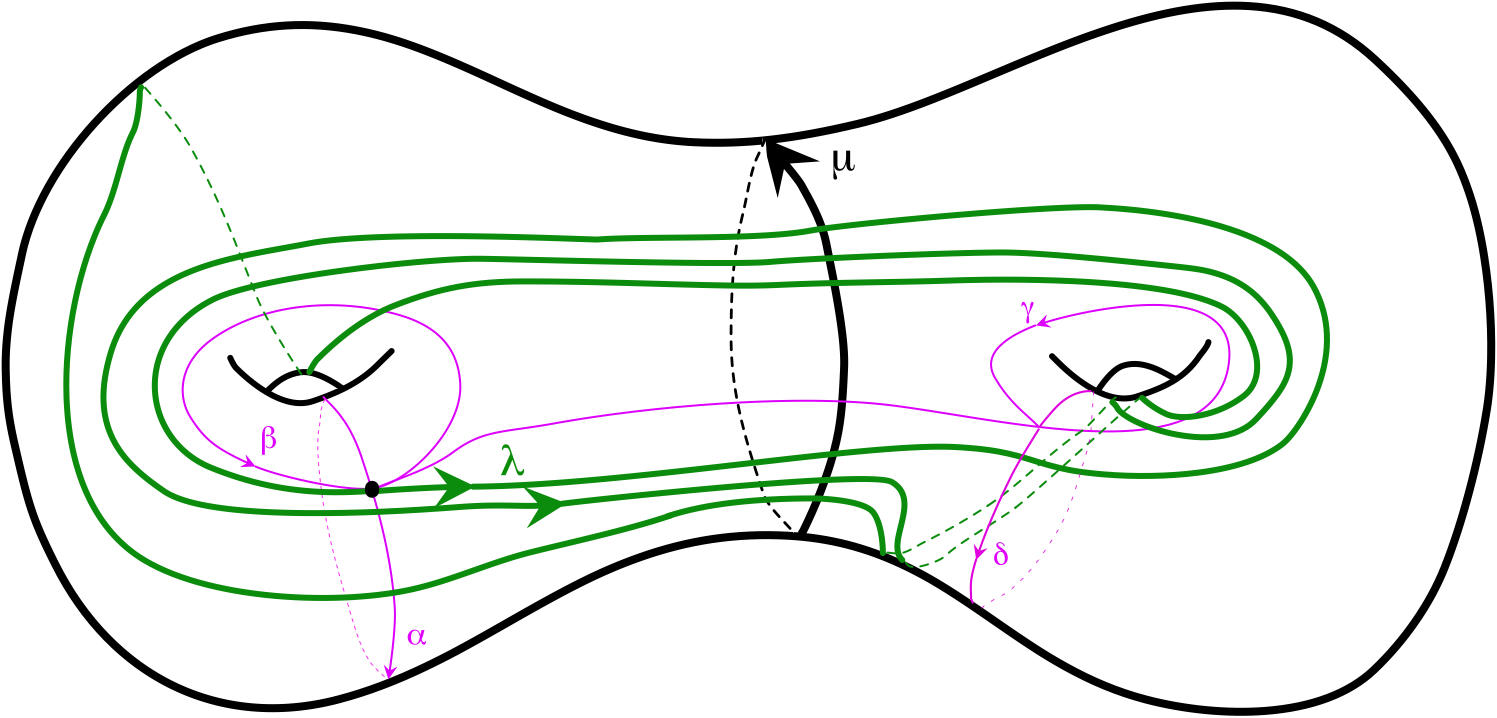}
         
            \caption{The curve $\lambda=\gamma\beta\delta\gamma\beta\alpha^{-1}\delta\gamma\beta$, in green, used for twisting. The curves $\alpha$ and $\beta$ are pink on the left, $\gamma, \delta$ are pink on the right, while $\mu$ s in black in the middle $\beta$ and $\gamma$ are the "horizontal" curves among the pink ones.}
            \label{fig;lambda}
        \end{figure}

         Consider the mapping classes obtained by the product of Dehn twists $\phi= \tau_{\gamma_1}^n\tau_{\lambda}^{2m}$. The construction of Penner-Thurston \cite{Penner} ensures that, for $n, m>\!\!>1$, this defines a pseudo-Anosov mapping class on $\Sigma$. 
         
         Consider the two 2-sheeted covers $\Sigma_1, \Sigma_2$ of $\Sigma$ whose fundamental groups are the kernels of the two homomorphisms $\pi_1(\Sigma, v) \to \bbZ/2$ given by \[\alpha, \delta \mapsto 0, \quad \beta, \gamma \mapsto 1\] on the one hand, and \[\alpha, \gamma, \delta \mapsto 0, \quad \beta \mapsto 1\] on the other. These are two surfaces of genus 3,  pictured in Figure \ref{fig;covers}.
         
         \begin{figure}[ht]
             \centering
             \includegraphics[width=0.8\textwidth]{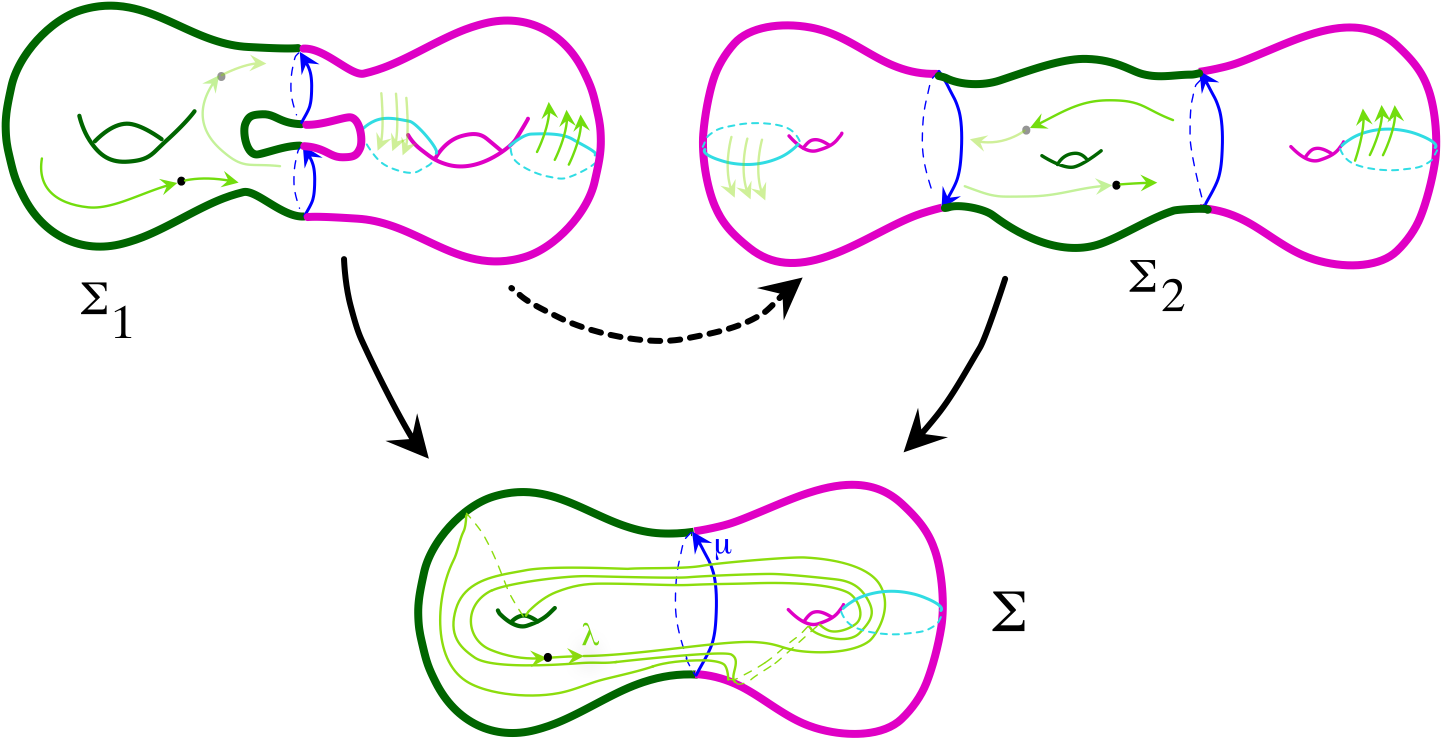}
             \caption{The two homeomorphic surfaces   $\Sigma_1$ (top left), $\Sigma_2$ (top right), and their (different) covering maps of  $\Sigma$ (bottom). The base point (black point) in $\Sigma$ has two lifts in each covers, in black and grey.  The curve $\mu$ in $\Sigma$ (in blue) has two lifts that are curves in each covers (in blue). The curve $\lambda$ in $\Sigma$ (in green) has two lifts that are {\it arcs} in each covers. For readability, in each cover,  only pieces of them are drawn, in green for one of the arcs (the one starting at the black point), and in light green for the other arc (the one starting at the grey point).   The  green lift  (that starts at the black point), ends either at the black or the grey point.  In the rightmost cover, it ends at the grey point, while in the leftmost cover, it ends at the black point, thus defining a curve. In particular, the whole preimage of green curve of $\Sigma$ is a single curve in $\Sigma_2$ on the right, but is the union of two curves in $\Sigma_1$ on the left.}
             \label{fig;covers}
         \end{figure}
         
         In both covers, the twist $\tau_{\mu}$ lifts as a mapping class that is trivial in homology.
           Observe that in $\Sigma_1$ the curve $\lambda$ lifts as two disjoint simple closed curves, but on $\Sigma_2$ it lifts  to two arcs, that when concatenated make a simple closed curve. In other words,  the squared twist $\tau_{\lambda}^2$ lifts to both covers, as a mapping class.
         
         In $\Sigma_2$, it lifts as a single Dehn twist over a simple (non-separating) curve. It follows that in this cover, the mapping class induced  by  $\phi $ induces, on the homology
         \emph{of $\Sigma_2$}, the identity plus a rank one endomorphism.
         


         In $\Sigma_1$, however,  $\lambda$ lifts as a pair of simple closed curves, and  $\tau_{\lambda}^2$ lifts  as a product of two Dehn twists over the two disjoint simple closed curves. It is not automatic that this product would be different in homology as a single Dehn twist, but it turns out that, by a slightly tedious but simple computation, the action on the homology of this mapping class is indeed the identity plus a rank 2 endomorphism. 
         
         Indeed, in homology of the cover $\Sigma_1$, one can form an explicit  basis, as \[(\alpha, \beta^2, \gamma^{-1}\beta^{-1}, \delta, \gamma^2, \beta\alpha^{-1}\beta^{-1}\alpha),\] and the matrix of the homology representation of $\tau_{\lambda}$ is the following matrix $M$ in this basis.   
           
         On the right, $M-I_6$ is of rank $2$.    
         \[ M=\begin{pmatrix} 
         -2 & -2 & -1  & -3 & 4 & 0 \\
         9 & 4  & 0 & 9 & -6 & 9 \\
         9 & 0  & -2 & 9 & 0 & 18 \\
         6 & 4  & 2 & 7 & -8 & 0 \\
         9 & 3  & 0 & 9 & -5 & 9\\
         0 & -1 & -1 & 0 & 2 & 4  \end{pmatrix}, \qquad M-I_6 =\begin{pmatrix} 
         -3 & -2 & -1  & -3 & 4 & 0 \\
         9 & 3  & 0 & 9 & -6 & 9 \\
         9 & 0  & -3 & 9 & 0 & 18 \\
         6 & 4  & 2 & 6 & -8 & 0 \\
         9 & 3  & 0 & 9 & -6 & 9\\
         0 & -1 & -1 & 0 & 2 & 3  \end{pmatrix} \]
         
                \if0 NOT IN PRINT :  COMPUTATION NOTES.. (TO KEEP). HOW WAS THE COMPUTATION DONE.
                     
                      le twist se fait en tournant à gauche sur l'intersection. 
                     
                         \[f(\beta) = \beta \lambda  et  f(\gamma) = \lambda^{-2} \gamma\]
                     
                      Pour la base de l'homologie de $\Sigma_1$ (mais le calcul est dans $\Sigma$): 
                      
                      \[f(\alpha) = \lambda^3 \alpha \\
                      f(\beta^2)= \beta \lambda \beta \lambda \\
                      f(\gamma^{-1}\beta^{-1}) = \gamma^{-1} \lambda  \beta^{-1} \\
                      f(\delta) = \lambda^3 \delta \\
                      f(\gamma^2) = \lambda^{-2} \gamma \lambda^{-2} \gamma \\
                      f(\beta \alpha^{-1} \bet^{-1} \alpha) = \beta \lambda \alpha^{-1} \lambda^{-4} \beta^{-1} \lambda^3 \alpha. \]

                      Trois regles de calcul en homologie dans la base de rang 6 : 
                     
                     \[\gamma\beta ---> \gamma^2 e_3 \beta^2 \]
                     
                     \[\beta \delta \gamma --> \beta \gamma \delta e_6^{-1}\]
                     
                     \[\beta \alpha^{-1} \delta \gamma --> \alpha^{-1} \beta \gamma \delta\]
                     
                     Pour le reste on utilisera la reconnaissance des elements de la base.

                      Rappel : (en homologie de $\Sigma_1$) 
                     $ \lambda = (-1, 3, 3, 2, 3, 0) $
                      
                      En particulier on a la regle supplementaire : 
                      \[\beta \lambda \delta --> (-1, 1, -3, 2, 0, -1)\]  (on pourrait rassembler ici les valeurs de 
                      $ \beta \lambda \gamma$, ou 
                      $ \gamma^{-1} \lambda \beta^{-1}$ qui interviennent aussi plus bas...)
                     
                     On obtient $f(e_2)$ en homologie;
                     
                     $ f(e_3) = (\gamma^{-1} \gamma) (\beta \delta \gamma) (\beta \alpha^{-1} \delta \gamma)$ d'ou sa valeur par les regles.
                     
                     $ f(e_4)$ immédiat
                     
                     $ f(e_5)$ donné par le calcul preliminaire de l'image en homologie de 
                    $ \gamma \lambda^{-2} \gamma = 
                    (\gamma \beta) (\beta^{-1}\lambda^{-1} \beta^{-1}) ( \beta \lambda^{-1} \gamma)$. Le dernier facteur est $ f(e_3)^{-1}$. Le reste est connu.
                    
                    $f(e_6) = \beta \lambda \alpha^{-1} (\beta^{-1}). (\beta) \lambda^{-4} \beta^{-1} . \lambda^3 \alpha$
                    
                    C'est le plus dur. Ca se découpe aux points: on utilise 
                    $\beta \alpha^{-1} \beta^{-1} = e_6\alpha^{-1}$  
                    la partie avant le point devient : 
                     
                     ** $(-2, 0, -3, 2, 0, 0)$  (utiliser les regles)
                    
                    la partie apres le dernier point devient: 
                    
                     ** $(-2, 9, 9, 6, 9, 0)$   car c'est $f(e_1)$
                    
                    la partie entre les points devient
                    
                    \[(\beta \alpha^{-1}\delta \gamma)^{-1}(\beta \delta \gamma)^{-1} \gamma^{-1} \lambda^{-3} \beta^{-1}.\]
                    
                    les regles donnent pour les deux parentheses:

                     ** $(1, 0, 2, -2, 0, 1)$
                     
                    il reste $\gamma^{-1} \lambda^{-3} \beta^{-1}$  à calculer. 
                    
                    \[\gamma^{-1} \lambda^{-3} \beta^{-1} = \gamma^{-2} (\gamma \lambda^{-2} \gamma) \gamma^{-1} \lambda^{-1} \beta^{-1}\] 
                     
                     on obtient 
                     ** $(2, 0, 6, -4, 0, 2)$ pour les deux premieres parentheses, il reste $ \gamma^{-1} \lambda^{-1} \beta^{-1}$ à calculer en homologie.
                     
                     on utilise les regles, on trouve
                     
                     ** $(1, 0, 4, -2, 0, 1 )$.

                     on fini par additionner les 5 "**":
                     
                     $f(e_6) = (0, 9, 18, 0, 9, 4 )$ comme prévu. C'est la fin du calcul de $M$.
                     
                     Et  pour mémoire :
                     $M_2 $ la matrice d'homologie du twist (relevé de $\tau_\lambda^2$)  sur $\Sigma_2$.  Dans quelle base ? celle donnée par l'isomorphisme entre $\sigma_1$ et $\Sigma_2$. 
                     
					\[ M_2 =\begin{pmatrix} 
                      7 & -4 & 4  & 6 & -4 & 6 \\
                      9 & -5  & 6 & 9 & -6 & 9 \\
                      9 & -6  & 7 & 9 & -6 & 9 \\
                      -6 & 4  & -4 & -5 & 4 & -6 \\
                      9 & -6  & 6 & 9 & -5 & 9\\
                      6 & -4 & 4 & 6 & -4 & 7  \end{pmatrix}  \]
                     
                \fi

         Therefore, the two lifts of $\phi$ to the two  surfaces of genus three are not conjugate by any isomorphism between the surfaces groups. 
         
         To conclude, our explicit example of a pair of pseudo-Anosov automorphisms on the fundamental group of a surface of genus 3, that are commensurably conjugate, but not conjugate,  are these two lifts of $\phi$. See Figure \ref{fig;covers}. These are the mapping classes  obtained, in each picture of genus 3 surface,  by twisting $k$ times over the pair of blue curves, and then twisting $2m$ times  along the pair of   green and orange curves, in $\Sigma_1$, and only $m$ times along the single  green-and-orange curve in $\Sigma_2$.
          \end{proof}

    \section{Infinitely many class number one pseudo-Anosovs}

    We start this Section by a lemma which is probably well known to specialists.  {Since we could not find an explicit reference, we include a proof for completeness.}
    
    \begin{lemma}
        There exists $V_0>0$, and  infinitely many conjugacy classes of maximal non-arithmetic uniform lattices in $\PSL_2(\bbC)$, with co-volume $\leq V_0$.
    \end{lemma}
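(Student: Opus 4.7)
The plan is to reduce the statement to a counting question about commensurability classes of non-arithmetic lattices, and then to exploit hyperbolic Dehn surgery to produce bounded-volume examples.

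First, I would invoke Margulis's commensurator theorem: for a lattice $\Gamma<PSL_2(\bbC)$, non-arithmeticity is equivalent to the commensurator $\Comm{\Gamma}$ (computed inside $PSL_2(\bbC)$) being itself discrete, hence a lattice, and containing $\Gamma$ with finite index. In that situation, $\Comm{\Gamma}$ is the unique maximal lattice in the commensurability class of $\Gamma$, and its co-volume is no larger than that of $\Gamma$. Consequently, ``maximal non-arithmetic uniform lattices'' are precisely the commensurators of non-arithmetic uniform lattices, and two such are $PSL_2(\bbC)$-conjugate exactly when the original lattices are commensurable up to conjugation. It is therefore enough to exhibit a uniform constant $V_0$ and infinitely many pairwise non-conjugately-commensurable non-arithmetic uniform lattices of co-volume at most $V_0$, and then to replace each by its commensurator.

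Next, I would build such a family by hyperbolic Dehn filling. Start with any non-arithmetic cusped hyperbolic $3$-orbifold $N$ of finite volume $V_0$; explicit examples are provided by the Gromov--Piatetski-Shapiro interbreeding construction, or by known non-arithmetic link complements. By Thurston's hyperbolic Dehn surgery theorem, all but finitely many Dehn fillings on the cusps of $N$ yield closed hyperbolic $3$-orbifolds of volume strictly less than $V_0$, producing an infinite family of uniform lattices in $PSL_2(\bbC)$ with uniformly bounded co-volume.

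Then I would ensure that infinitely many of the resulting fillings give pairwise non-commensurable, non-arithmetic uniform lattices. For this, I would appeal to the Neumann--Reid theorem: the invariant trace field is a commensurability invariant, and under Dehn surgery it varies nontrivially with the slope. More precisely, only finitely many slopes can give rise to any prescribed number field and, in particular, to an arithmetic invariant trace field. This yields infinitely many distinct commensurability classes of non-arithmetic closed uniform lattices of co-volume at most $V_0$; taking commensurators then concludes the argument.

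The main obstacle is to make sure that infinitely many of the fillings of $N$ stay non-arithmetic \emph{and} fall into distinct commensurability classes. This is however standard in the $3$-dimensional hyperbolic literature: it follows from the combined results of Gromov--Piatetski-Shapiro, Thurston, Neumann--Reid, and Margulis, and does not seem to require any new argument.
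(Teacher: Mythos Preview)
Your argument is correct and shares the paper's backbone: produce closed hyperbolic $3$-manifolds of bounded volume by Dehn filling a one-cusped one, discard the finitely many arithmetic fillings, and pass to the unique maximal lattice in each commensurability class. The difference is in how you separate the resulting maximal lattices. You appeal to Neumann--Reid, arguing that the invariant trace fields of the fillings take infinitely many values and hence the commensurability classes are distinct. The paper instead uses only volume: the filling volumes $V_n$ strictly accumulate on $V_0$, the co-volume of each maximal lattice $\Gamma_n^{+}$ divides $V_n$, and by Kazhdan--Margulis it is bounded below; a pigeonhole on the bounded index $[\Gamma_n^{+}:\Gamma_n]$ then forces infinitely many of the $\Gamma_n^{+}$ to have pairwise distinct co-volumes, hence to be non-conjugate. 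This is lighter, avoiding any control of trace fields under surgery (a statement which, while true, is not quite as immediate as your phrasing suggests).

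Two minor points on your write-up: the hypothesis that the initial cusped manifold be non-arithmetic plays no role in either argument and can be dropped, and Gromov--Piatetski-Shapiro is not the natural source here---any hyperbolic knot complement already suffices, as the paper notes. For non-arithmeticity of the fillings, the paper simply cites Borel's finiteness theorem for arithmetic lattices of bounded co-volume, which is exactly the fact you need.
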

    
    \begin{proof}
        Take $M_0$ a non-compact, finite volume hyperbolic 3-manifold with one cusp (a complement of a hyperbolic knot for instance), and consider a  sequence $M_n$  of  compact hyperbolic manifolds obtained by deeper and deeper hyperbolic Dehn filling on $M_0$. Their volumes $V_n$  are bounded  above by the volume $V_0$ of the initial manifold, and accumulate on $V_0$. It follows from a theorem of Borel  \cite[Theorem 8.2]{Bor81} that, after passing to  a subsequence, we can assume that they have non-arithmetic fundamental group $\Gamma_n$ in $\PSL_2(\bbC)$. By another theorem of Borel,  on commensurability \cite[Main Theorem]{Bor81}, for all these  $n$, there exists a unique biggest uniform lattice $\Gamma_n^{+}$ containing $\Gamma_n$, necessarily as a finite index subgroup. For each $n$, the co-volume of $\Gamma^{+}_n$ in $\bbH^3$  divides $V_n$, and is larger than a positive constant \cite{Gro_Bour, KM}. Therefore, since $V_n$ accumulates on $V_0$, the $\Gamma^{+}_n$ are eventually all of different co-volume, hence non-conjugate. 
        
    \end{proof}

    We finally prove Theorem \ref{theo;classnumberone}.
    
    \begin{proof}

    Let us denote by $\Lambda^0_i, i\in \bbN$ a sequence of  representatives of different  conjugacy classes of maximal non-arithmetic uniform lattices in $\PSL_2(\bbC)$.  
      In each $\Lambda^0_i$, by Agol's virtual fibration theorem {\cite[Theorem 9.2]{A}}, there is a finite index subgroup $\Lambda^1_i$ that maps onto $\bbZ$ with finitely generated kernel. Since the index of $\Lambda^1_i$ in $\Lambda^0_i$ is finite, the chains of strict inclusion of subgroups from $\Lambda^0_i$ to $\Lambda^1_i$ are finite. Therefore,  there exists a maximal subgroup $\Lambda_i$ of $\Lambda^0_i$, containing $\Lambda^1_i$, and surjecting on $\bbZ$ (under a homomorphism $\pi_i: \Lambda_i \to \bbZ$).  The index of  $\Lambda^1_i$ in $\Lambda_i$ is necessarily finite, less than $[\Lambda^1_i, \Lambda^0_i]$, and  the kernel of $\pi_i$ has therefore finite index in the fiber of $\Lambda^1_i$. In particular it is finitely generated. By Lemma \ref{lem;2orbi}, it is therefore
    the fundamental group of a hyperbolic $2$-orbifold. Let $G_i$ be this subgroup, and $t_i \in \Lambda_i$ such that  $\Lambda_i = G_i \rtimes \langle t_i\rangle$.

    Denote by $\phi_i$ the automorphism  of $G_i$ given by  conjugation by  $t_i$.
     \newcommand{\Mu}{{\rm M}}
     Let us compute the class number of $\phi_i$. Let $\psi_i$ be a commensurably conjugated automorphism. We want to prove that it is conjugate to $\phi_i$ in $\aut{G_i}$.  By Proposition \ref{prop;fiber_thickens}, the lattice $\Lambda_i$ lives in a larger lattice containing a copy $\Mu_i$ of the mapping torus of $\psi_i$ and surjecting onto $\bbZ$ with the fibers of $\Mu_i$ and $\Lambda_i$ in the kernel. By Borel's Theorem on commensurability \cite{Bor81},  since $\Lambda_i$ is non-arithmetic, there is a unique biggest lattice containing $\Lambda_i$, and it is therefore $\Lambda^0_i$. Therefore,  by maximality of $\Lambda_i$, $\langle \Mu_i\cup \Lambda_i\rangle = \Lambda_i$, and it follows from Lemma \ref{lem;rhoG_equal_psi_conjugate} that $\phi_i$ and $\psi_i$ are conjugated in $\aut{G_i}$. Therefore,  the class number of $\phi_i$ is one.
   
   The same argument also reveals that for $i\neq j$, $\phi_i$ and $\phi_j$ do not have any non-trivial power that are commensurably conjugate: if they did, the mapping tori would be represented in $PLS_2(\bbC)$ in a common lattice that fibers with finitely generated kernel, and Borel's Theorem again ensures that the maximal lattice containing them is unique, and therefore $i=j$. 
    \end{proof}
    
\section{Appendix: a discussion of the algebraic number theory for class number of matrices, and its analogous concepts}

     \subsection{Orders and ideal classes}
     
     We briefly discuss the algebraic number theory behind the class numbers of matrices in $\GL_n(\bbZ)$ with irreducible characteristic polynomial.
      Recall that an order is a ring that is finitely generated as $\bbZ$-module. 
      In an algebraic number field $K$ (a finite extension of $\bbQ$), there is a  maximal order which is the ring $\calO_K$ of algebraic integers. 
       Let $\chi$ be a unitary irreducible polynomial over $\bbZ$, and $\xi$ a root in $K=\bbQ[X]/(\chi)$. We are interested in the order $\bbZ[\xi]$ in $K$.  
     
     If $\fraka, \frakb$ are ideals in this ring, they are said to be equivalent if there exist $x,y$ such that $x\fraka = y\frakb$. The ideal $\fraka$ is invertible if there is $\frakb$ such that $\fraka \frakb$ is principal, hence equivalent to $(1)$. The monoid of equivalence classes of ideals is the ideal class  monoid (or semigroup), and restricting to the invertible ideals, this constitutes the Picard group of the order $\bbZ[\xi]$.    
     
     The Picard group of an order in a number field  is finite (see discussion in \cite{KlunersPauli}). The Ideal Class Monoid  of  $\bbZ[\xi]$  is finite \cite[Thm. 8]{ZZ}. It is a subset of  a finite union of Picard groups of orders containing  $\bbZ[\xi]$  (those orders are contained in the maximal order, the ring of algebraic integers). 
     When $n=2$, it is equal to this finite union of Picard groups (see \cite[Prop. 3.7]{Marseglia}). 
      The  Latimer-MacDuffee theorem says the following. 
     
     \begin{thm}\label{theo;LMD} \cite{LMD, T}
       Let $\chi$ be a unitary irreducible polynomial over $\bbZ$, and $\xi$ a root in $\overline \bbQ$.  Let $\calC_\chi$ be the set of $\GL_n(\bbZ)$-conjugacy classes of matrices in $\calM_n(\bbZ)$ whose characteristic polynomial is $\chi$. 
           Then $\calC_\chi$ is in bijection with the  Ideal Class Monoid  of  $\bbZ[\xi]$. 
     \end{thm}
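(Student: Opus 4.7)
The plan is to set up a functorial correspondence between matrices in $\calM_n(\bbZ)$ with characteristic polynomial $\chi$ and fractional ideals of the order $\bbZ[\xi]$ inside $K = \bbQ[X]/(\chi)$, and then check that $GL_n(\bbZ)$-conjugacy on the matrix side matches the ideal equivalence $\fraka \sim \frakb \iff \exists x,y \in K^*, \; x\fraka = y\frakb$ on the ideal side.

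First I would associate to a matrix $M \in \calM_n(\bbZ)$ with $\chi_M = \chi$ a $\bbZ[\xi]$-module structure on $\bbZ^n$ by letting $\xi$ act as $M$; this is well-defined since $\chi(M) = 0$ by Cayley--Hamilton. Extending scalars, $\bbQ^n$ becomes a $K$-module; irreducibility of $\chi$ forces the minimal polynomial of $M$ over $\bbQ$ to be $\chi$, so $\bbQ^n$ is a $K$-vector space of dimension $n/\deg(\chi)=1$. A choice of $K$-linear isomorphism $\bbQ^n \to K$ carries $\bbZ^n$ to a finitely generated $\bbZ[\xi]$-submodule $I_M \subset K$ containing a $\bbQ$-basis of $K$, that is, a fractional ideal of $\bbZ[\xi]$. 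Changing the identification multiplies $I_M$ by an element of $K^*$, so the class of $I_M$ in the Ideal Class Monoid is well-defined.

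Next I would verify that the assignment $[M] \mapsto [I_M]$ descends to conjugacy classes and is injective. If $P \in GL_n(\bbZ)$ satisfies $PM_1P^{-1} = M_2$, then $P$ is a $\bbZ[\xi]$-linear bijection $\bbZ^n \to \bbZ^n$ between the two module structures, hence $I_{M_1} \cong I_{M_2}$ as $\bbZ[\xi]$-modules. Conversely, any $\bbZ[\xi]$-linear bijection $I_{M_1} \to I_{M_2}$ extends $K$-linearly to $K \to K$, which is multiplication by some $x \in K^*$; this gives $I_{M_1} = x^{-1} I_{M_2}$, i.e., the paper's equivalence relation, and choosing bases of the two lattices compatibly recovers a matrix $P \in GL_n(\bbZ)$ conjugating $M_1$ to $M_2$. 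Thus ideal classes in the sense of the paper correspond exactly to $GL_n(\bbZ)$-conjugacy classes.

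Finally I would establish surjectivity: given a fractional ideal $\fraka \subset K$, it is a free $\bbZ$-module of rank $n$, so picking a $\bbZ$-basis $(b_1, \dots, b_n)$ makes multiplication-by-$\xi$ on $\fraka$ correspond to a matrix $M \in \calM_n(\bbZ)$. Because $\xi$ acts on $\fraka \otimes_{\bbZ} \bbQ = K$ with minimal polynomial $\chi$ and $\chi$ is irreducible of degree $n$, the characteristic polynomial of $M$ equals $\chi$, and by construction $I_M$ is in the class of $\fraka$. The main technical obstacle is this bookkeeping between the two equivalence relations, together with the need to work with $\bbZ[\xi]$-fractional ideals rather than fractional ideals of the maximal order $\calO_K$ (since $\bbZ[\xi]$ is typically non-maximal, the monoid $-$ rather than group $-$ structure is essential, and the invertible ideals form only the Picard subgroup).
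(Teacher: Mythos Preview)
Your argument is correct and is the standard module-theoretic proof of Latimer--MacDuffee: endow $\bbZ^n$ with the $\bbZ[\xi]$-action through $M$, identify $\bbQ^n$ with $K$ as a one-dimensional $K$-vector space, and read off the image of $\bbZ^n$ as a fractional ideal. The paper's proof (Proposition~\ref{prop;LMD_hom}) is the same theorem proved through the dual, eigenvector formulation due to Taussky: one picks an eigenvector $v_M\in\bbZ[\xi]^n$ for the eigenvalue $\xi$ and takes the ideal $\fraka_{v_M}$ generated by its entries, i.e.\ the image of $v_M$ under all $\bbZ[\xi]$-linear forms on $\bbZ[\xi]^n$. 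Up to the usual row/column transpose, your embedding $e_i\mapsto\omega_i$ and their eigenvector $(\omega_1,\dots,\omega_n)^\top$ encode the same data, and the surjectivity and injectivity steps match line for line. The paper chooses the eigenvector-and-linear-forms phrasing deliberately, because in the following subsection the linear forms are reinterpreted as intersection numbers of the measured foliation $\calF_{v_M}$ with codimension-one subtori of $\bbR^n/\bbZ^n$; this is the bridge to the pseudo-Anosov analogy that motivates the appendix. Your formulation buys cleaner bookkeeping; theirs buys the geometric picture.
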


     \subsection{Arbitrarily large class numbers}

     We owe to Sara Checcoli the following argument for showing that the class number is not bounded among elements of $\GL_2(\bbZ)$. 
     
     \begin{prop}
    The class numbers of elements in $\GL_2(\bbZ)$ with irreducible characteristic polynomials are finite but unbounded.   
     \end{prop}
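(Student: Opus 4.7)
The plan is to split into two independent arguments: finiteness, which is immediate from the Latimer-MacDuffee theorem, and unboundedness, which I will obtain by transplanting the unboundedness of class numbers of real quadratic fields (\cite{MW}) into the matrix setting via the same theorem.

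For finiteness, I would simply observe that the commensurated-conjugacy class in $GL_2(\bbZ)$ of a matrix $M$ with irreducible characteristic polynomial $\chi$ coincides with the set of integer matrices having characteristic polynomial $\chi$: since $\det M = \pm 1$ is a coefficient of $\chi$, any integer matrix with characteristic polynomial $\chi$ automatically lies in $GL_2(\bbZ)$. By Theorem \ref{theo;LMD}, this set is in bijection with the ideal class monoid of $\bbZ[\xi]$, which is finite.

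For unboundedness, given $N$, I would first invoke \cite{MW} to produce a real quadratic field $K = \bbQ(\sqrt{D})$ with ordinary class number $h(K) > N$. Then I would construct an explicit matrix realizing a large class number: let $\epsilon$ be the fundamental unit of $\calO_K$. As an irrational algebraic integer of unit norm, $\epsilon$ satisfies an irreducible monic polynomial $\chi(X) = X^2 - tX + \nu$ over $\bbZ$ with $t = \epsilon + \overline{\epsilon}$ and $\nu = N_{K/\bbQ}(\epsilon) = \pm 1$, so its companion matrix
\[ M_\epsilon = \begin{pmatrix} 0 & -\nu \\ 1 & t \end{pmatrix} \]
lies in $GL_2(\bbZ)$ and has $\xi := \epsilon$ as an eigenvalue generating $K$. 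By Theorem \ref{theo;LMD}, the class number of $M_\epsilon$ equals the cardinality of the ideal class monoid of $\bbZ[\epsilon]$. Invoking the $n=2$ case recalled above (\cite[Prop.\ 3.7]{Marseglia}), this monoid is the disjoint union of the Picard groups of all orders $\calO$ with $\bbZ[\epsilon] \subseteq \calO \subseteq \calO_K$; retaining only the summand $\calO = \calO_K$ yields
\[ \#\,\mathrm{ICM}(\bbZ[\epsilon]) \;\geq\; |\mathrm{Pic}(\calO_K)| \;=\; h(K) \;>\; N, \]
so the class number of $M_\epsilon$ exceeds $N$.

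There is no genuine obstacle here: both of the deep ingredients (Latimer-MacDuffee and Montgomery-Weinberger) are cited. The only point that deserves a moment of care is the easy but essential observation that eigenvalues of matrices in $GL_2(\bbZ)$ with irreducible characteristic polynomial are exactly the irrational quadratic units, which is what makes the construction through fundamental units of real quadratic fields run. Everything else is a direct chain of applications.
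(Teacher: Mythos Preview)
Your proof is correct and follows essentially the same approach as the paper: both invoke Montgomery--Weinberger to obtain a real quadratic field $K$ with large class number, produce a unit in $\calO_K$ (you take the fundamental unit, the paper solves Pell's equation $a^2-db^2=4$), form its companion matrix in $GL_2(\bbZ)$, and then use Latimer--MacDuffee together with the inequality $\#\mathrm{ICM}(\bbZ[\xi])\geq h(K)$. The only cosmetic difference is in justifying that last inequality: you cite Marseglia's decomposition of the ideal class monoid, while the paper appeals to the fact (stated in the introduction) that the class number of an order dominates that of the ring of integers.
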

     \begin{proof}
     We already know that they are finite by Theorem \ref{theo;LMD}. 
     A theorem of Montgomery and Weinberger \cite{MW} ensures that there are infinitely many square-free integers $d>0$ of the form $d=4n^2+1$ such that the class number of $\calO_{\bbQ(\sqrt{d})}$ is larger than $\sqrt{d} (\log\log d)/(\log d)$. 
     
     For each such $d$, consider Pell's equation $a^2-db^2=4$, which has integer solutions by Lagrange's theorem \cite{Lag}. 
       Let $a,b$ be a solution, and $\chi_{a}(X) = X^2 +aX +1$. Since $a^2 -4 = db^2$ is not a square, $\chi_a$ is irreducible over $\bbZ$, and it is obviously the characteristic polynomial of  $\left(\begin{smallmatrix} -a& -1\\ 1 & 0 \end{smallmatrix}\right)\in \GL_2(\bbZ)$. Its splitting field is $\bbQ(\sqrt{d})$, and since $d=4n^2+1\equiv   1\, mod \,4$, the ring of integers is $\bbZ[(1+\sqrt{d})/2]$. Its class number is larger than  $\sqrt{d} (\log\log d)/(\log d)$, and therefore, the class number of the order $\bbZ[(a + b\sqrt{d})/2]$ also. By the theorem of Latimer and MacDuffee,  for our integer $a$, the   class number  of $\left(\begin{smallmatrix} -a& -1\\ 1 & 0 \end{smallmatrix}\right)$ is larger than  $\sqrt{d} (\log\log d)/(\log d)$.
     \end{proof}

     \subsection{On the proof of Latimer-MacDuffee theorem}
     
     We reproduce a variation of the proof {given in} \cite{T}, that will help us connect to the case of surfaces.
       For $\xi \in \overline \bbQ$, the dual of the $\bbZ[\xi]$-module $\bbZ[\xi]^n$ is   
      $Hom_{\bbZ[\xi]}(\bbZ[\xi]^n, \bbZ[\xi])$, it is  the module of $\bbZ[\xi]$-linear combinations of the $n$ entries of vectors in $\bbZ[\xi]^n$.
       With the notation of Theorem \ref{theo;LMD}, it suffices to establish the following Proposition.  
     
     \begin{prop}\label{prop;LMD_hom}
       For $M \in \calM_n(\bbZ)$ of characteristic polynomial $\chi$, let  $v_M$ in $\bbZ[\xi]^n$ be an eigenvector for $\xi$, and  \[\fraka_{v_M} = \{ \alpha(v_M),  \alpha \in Hom_{\bbZ[\xi]}(\bbZ[\xi]^n, \bbZ[\xi])  \}.\]
       
       Then $\fraka_{v_M}$ is an ideal of $\bbZ[\xi]$,  the ideal class of $\fraka_{v_M}$ depends only on $M$, every class is realized, and for all $M,N$, the ideal classes  of $\fraka_{v_M}$ and  $\fraka_{v_N}$ are equal if and only if $[M]=[N]$ in $\calC_\chi$.
     \end{prop}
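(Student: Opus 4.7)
The plan is to establish the four claims (ideal structure, independence of the choice of $v_M$, invariance on conjugacy classes, and the bijection with $\calC_\chi$) by translating between the eigenvector formulation and the module-theoretic viewpoint on $\bbZ^n$ as a $\bbZ[\xi]$-module via $M$. The ideal property of $\fraka_{v_M}$ is immediate: identifying $Hom_{\bbZ[\xi]}(\bbZ[\xi]^n,\bbZ[\xi])$ with row vectors in $\bbZ[\xi]^n$, the set $\fraka_{v_M}$ is precisely the ideal of $\bbZ[\xi]$ generated by the entries $v_M^{(1)},\ldots,v_M^{(n)}$. Independence from the choice of $v_M$ uses irreducibility of $\chi$: the $\xi$-eigenspace of $M$ in $\bbQ[\xi]^n$ is one-dimensional over $\bbQ[\xi]$, so any two integral eigenvectors differ by a factor $c\in\bbQ[\xi]^\times$; writing $c=p/q$ with $p,q\in\bbZ[\xi]$ gives $q\fraka_{v_M}=p\fraka_{v'_M}$, placing these in the same ideal class. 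Invariance under $GL_n(\bbZ)$-conjugation is similar: if $N=PMP^{-1}$ with $P\in GL_n(\bbZ)$, then $Pv_M$ is an integral $\xi$-eigenvector for $N$ whose entries are $\bbZ$-linear combinations of those of $v_M$ (and conversely via $P^{-1}$), so $\fraka_{v_N}=\fraka_{v_M}$ as ideals.

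The heart of the proof is to identify, up to $\bbZ[\xi]$-module isomorphism, the module $L_M=(\bbZ^n,M)$ (namely $\bbZ^n$ equipped with the $\bbZ[\xi]$-structure in which $\xi$ acts by $M$) with the ideal $\fraka_{v_M}$. Concretely, consider the $\bbZ$-linear map $\Phi:\bbZ^n\to\bbZ[\xi]$ defined on the standard basis by $\Phi(e_i)=v_M^{(i)}$. Its image is tautologically $\fraka_{v_M}$, and a short computation using the eigenvector relation shows that $\Phi$ intertwines the $\bbZ[\xi]$-actions, yielding an embedding of $L_M$ (or of $L_{M^T}$, depending on one's matrix-vector convention) into $\bbZ[\xi]$. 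Injectivity of $\Phi$ follows because both $L_M\otimes_\bbZ\bbQ$ and $\bbQ[\xi]$ are one-dimensional $\bbQ[\xi]$-spaces, so the induced rational map is nonzero hence injective, while $L_M$ itself is $\bbZ$-torsion-free.

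Given this identification, the remaining two claims fall out. For surjectivity, any nonzero ideal $\fraka\subseteq\bbZ[\xi]$ is $\bbZ$-free of rank $n$ (since $\fraka\otimes\bbQ\cong\bbQ[\xi]$); multiplication by $\xi$ on $\fraka$ has a matrix $M_\fraka\in\calM_n(\bbZ)$ in a chosen $\bbZ$-basis, with characteristic polynomial $\chi$ by irreducibility and a degree count; reading the basis as an element of $\bbZ[\xi]^n$ produces an eigenvector whose associated ideal recovers $\fraka$. For injectivity, if $[\fraka_{v_M}]=[\fraka_{v_N}]$ then $\fraka_{v_M}\cong\fraka_{v_N}$ as $\bbZ[\xi]$-modules (multiplication by an appropriate scalar in the fraction field), and composing with the $\Phi$'s produces a $\bbZ[\xi]$-linear isomorphism $L_M\cong L_N$, whose underlying $\bbZ$-linear matrix $P\in GL_n(\bbZ)$ conjugates $M$ to $N$. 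The main technical obstacle, and essentially the only real bookkeeping, is the transpose convention in the $\bbZ[\xi]$-linearity verification for $\Phi$: depending on column- versus row-eigenvector conventions, $Mv_M=\xi v_M$ identifies $L_M$ or $L_{M^T}$ with $\fraka_{v_M}$, and one must verify that either way the assignment $[M]\mapsto[\fraka_{v_M}]$ is well-defined on $\calC_\chi$, using that $M\mapsto M^T$ commutes with $GL_n(\bbZ)$-conjugacy via $P\mapsto(P^T)^{-1}$.
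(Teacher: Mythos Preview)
Your proof is correct and follows essentially the same approach as the paper's: both identify $\fraka_{v_M}$ as the ideal generated by the entries of $v_M$, use one-dimensionality of the $\xi$-eigenspace over $\bbQ[\xi]$ for well-definedness, realize an arbitrary ideal by choosing a $\bbZ$-basis and forming the multiplication-by-$\xi$ matrix, and obtain injectivity from the fact that the entries of a suitably scaled eigenvector form a $\bbZ$-basis of the ideal. Your module-theoretic packaging via $L_M$ and the map $\Phi$ is a mild abstraction of the paper's concrete eigenvector argument---in particular the paper's uniqueness step (``there is a unique integer matrix $Q$ with $Qv_N=\xi v_N$'') is exactly your statement that a $\bbZ[\xi]$-isomorphism $L_M\cong L_N$ furnishes a conjugating $P\in GL_n(\bbZ)$---and your remark on the transpose convention is a legitimate bookkeeping point that the paper handles implicitly by its choice of row convention in the surjectivity step.
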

     
    \begin{proof}  
     {The module}
      $\fraka_{v_M}$ is trivially a $\bbZ[\xi]$-module. Therefore it is an ideal  of $\bbZ[\xi]$. Any two choices of $v_M$ differ by a scalar in the field of fractions $\bbQ[\xi]$. Observing  that  $\fraka_{\lambda v} = \lambda \fraka_{v}$, we obtain that $M$ thus determines an ideal class of $\bbZ[\xi]$. 
      
     Let us show that every ideal of $\bbZ[\xi]$ is realized in this way. If $\frakb$ is a non-zero ideal of $\bbZ[\xi]$, it contains and is contained in two rank $n$ free abelian groups (respectively $\bbZ[\xi]b $ for $b\in \frakb\setminus\{0\}$, and  $\bbZ[\xi]$), hence it is a free $\bbZ$-module of rank $n$.      
      Consider $(\omega_1, \dots, \omega_n)$ a $\bbZ$-basis.
     Being moreover a $\bbZ[\xi]$-module, each $\xi \omega_i$ is a unique integral combination of $\omega_1, \dots, \omega_n$. Let $M$ be the matrix whose rows are the integer coefficients of these combinations. Thus, $M$ is a square matrix for which $\xi$ is an eigenvalue, and therefore, by Irreducibility,  its characteristic polynomial is $\chi$.  Since $v_M=(\omega_1, \dots, \omega_n)^\top$ is an eigenvector, $\fraka_{v_M}$ contains $\frakb$, and is equal to it because the coordinate linear forms generate $Hom(\bbZ[\xi]^n, \bbZ[\xi])$. %
     
     Two conjugate matrices $M, PMP^{-1}$ (for $P\in \GL_n(\bbZ)$)  produce the same ideal class: indeed, the eigenvector $v_{PMP^{-1}}$ can be chosen as $Pv_M$, and since \[Hom_{\bbZ[\xi]}(\bbZ[\xi]^n, \bbZ[\xi]) = Hom_{\bbZ[\xi]}(\bbZ[\xi]^n, \bbZ[\xi]) P, \] the images  $\fraka_{v_M}$ and $\fraka_{Pv_M}$   are the same.
        Finally, if two matrices $M,N$ produce the same ideal class, let $v_M=(\omega_1, \dots, \omega_n)^\top$ and $v_N= (\eta_1,\dots, \eta_n)^\top$ be the respective eigenvectors in $\bbZ[\xi]^n$, scaled so that their entries generate the same ideal.  The tuples $(\omega_1, \dots, \omega_n)$ and $(\eta_1,\dots, \eta_n)$ are two bases of the same $\bbZ$-module, therefore there exists $P \in \GL_n(\bbZ)$ such that $Pv_M=v_N$.  Both matrices $N$ and  $PMP^{-1}$ have $v_N$ as eigenvectors for the eigenvalue $\xi$.  Since the entries of $v_N$ form a basis of the $\bbZ$-module they generate, there is a unique matrix $Q$ with integer entries such that $Q v_N = \xi v_N$. So, $Q = N= P MP^{-1}$, which finishes the proof. 
     \end{proof}
     
     \subsection{A connection to foliations: the torus} 
     
    Let $T=\bbZ^n \backslash \bbR^n$ 
    be the $n$-dimensional unit torus. 
     Let $\alpha \in Hom(\bbZ^n, \bbZ)$ be a non-trivial linear form, and $\ker_\bbZ \alpha$ its kernel. 
     Let $p$ be the smallest positive value of $\alpha (\bbZ^n)$.

     The group $\bbZ^n$ splits as $\ker \alpha \oplus \langle v_\alpha \rangle$ for $v_\alpha$ a preimage of $p$.   Consider the extension of $\alpha$ on $\bbR^n$,   the hyperplane  $\ker_{\bbR} \alpha$ in $\bbR^n$, and its image in $T$: this is an embedded  codimension 1  subtorus $T_\alpha$, the quotient of $\ker_\bbR \alpha$ by the translation group  $\ker_\bbZ \alpha$. 
     
     Let us say that $T_\alpha$ carries a weight of $p$ (the positive generator of $\alpha(\bbZ^n)$).
     Let $\calH$ be the space of weighted codimension 1 subtori that are images of kernels (in $\bbR$) of non-trivial forms in  $Hom(\bbZ^n, \bbZ)$.
    
    Let us define the intersection form $\iota: \calH \times \bbR^n  \to \bbN$ to be  \[ \iota(T_\alpha, v) = |\alpha(v)|. \]
     
    First this is well defined: the weighted codimension 1 subtorus $T_\alpha$ determines  the form $\alpha$ up to the sign.
      Let us justify the name. 
     If $v\in \bbZ^n$ which is not a proper multiple,  the image of the segment $[0, v] \subset \bbR^n$ in $T_\alpha$ is a simple closed curve. If $1 \in Im(\alpha)$, this curve intersects exactly $|\alpha(v)|$ times the codimension 1 subtorus $T_\alpha$, as seen when expressing $v$ in the sum  $\ker \alpha \oplus \langle v_\alpha \rangle$.  If now $p$ is the weight of $T_\alpha$, the quantity $|\alpha(v)|$ counts each intersection point $p$ times.

     For general $w\in \bbZ^n$,  $\iota(T_\alpha, v) = |\alpha(v)|$ counts this geometric weighted intersection number multiplied by the largest $k$ so that $w\in k \bbZ^n$. 
        If now $v\in \bbR^n$, for all $m$ let us choose a vector $v_m \in \bbR^m$ such that $\|v_m\|_\infty <1$,  $mv-v_m\in \bbZ^n$. Observe that  $\iota (T_\alpha, v) = \lim_m \frac{1}{m} \iota (T_\alpha, mv-v_m)$. The intersection form counts an average of intersection numbers for the multiples of $v$. This justifies the denominator.

     Geometrically, we associate, to an arbitrary vector $v\in \bbR^n \setminus \{0\}$, the image in the torus $T$ of the line $\bbR v$.  The foliation $\calF_v$ of the torus is then defined as the partition by the images of the parallel lines to $\bbR v$.  If $v\in \bbZ^n$, this is a foliation by closed curves,  and if $v \notin \bbZ^n$ each line of the foliation is a geodesic that is infinite in both directions.

     We may interpret the intersection form $\iota(\cdot, v)$ as defining  a measure $\mu$  on each torus $T_\alpha$ of weight $1$ (i.e. for $\alpha$ achieving $1$): for each Borel $\Omega$, $\mu(\Omega)$ is $\lim_m \frac 1m n(m) $ for $n(m)$ the number of intersection points of the image of $[0, mv]$ in $T_\alpha$, with $\Omega$ (counted with multiplicity if $v\in \bbZ^n$). The intersection form thus defines a transverse measure to the foliation $\calF_v$ that endows each  codimension 1 subtorus $T_\alpha$ (for $\alpha$ achieving $1$) of the total mass $\mu(T_\alpha)= \iota(T_\alpha, v)$.  Choosing a multiple of $v$ then multiplies the measure by the factor. The foliation has  a well defined projective transverse measure.

     Let now $M$ be a matrix in $\GL_n(\bbZ)$ as before,  and $v_M$ an eigenvector in  $\bbR^n$ for the eigenvalue $\xi$. 
        We  now relate this geometric intersection form to the considerations on the ideal class associated to the conjugacy class of $M$.
      
     \begin{lemma}\label{lem;homs}
     If, for $\gamma \in Hom_{\bbZ}(\bbZ^n, \bbZ) $,  one denotes by $\gamma'$  its linear extension to $\bbR^n$, one has  
     \[ \begin{array}{lcl} \{\alpha(v_M), \alpha \in Hom_{\bbZ[\xi]}(\bbZ[\xi]^n, \bbZ[\xi])  \} & =& \{\alpha(v_M), \alpha \in Hom_{\bbR}(\bbR^n, \bbR), \alpha(\bbZ^n)\subset \bbZ  \} \\ 
     &=&  \{ \gamma'(v_M), \gamma \in Hom_{\bbZ}(\bbZ^n, \bbZ) \} \end{array}\]
     \end{lemma}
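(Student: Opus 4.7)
The plan is to identify all three sets with one and the same submodule of $\bbZ[\xi]$. Writing $v_M = (\omega_1,\dots,\omega_n)^\top$ with $\omega_i \in \bbZ[\xi]$, I expect to show that set (3) equals $\bbZ\omega_1 + \cdots + \bbZ\omega_n$, that set (1) equals $\bbZ[\xi]\omega_1 + \cdots + \bbZ[\xi]\omega_n$, that set (2) equals set (3) by a restriction/extension bijection, and finally that the two displayed modules coincide because $v_M$ is an eigenvector of an integer matrix.

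The first three identifications are essentially formal. Any $\gamma \in Hom_\bbZ(\bbZ^n,\bbZ)$ is determined by its values $\gamma(e_i) \in \bbZ$ on the standard basis, and $\gamma'(v_M) = \sum_i \gamma(e_i)\omega_i$ ranges exactly over $\bbZ\omega_1 + \cdots + \bbZ\omega_n$ as the coefficients vary freely; the analogous computation with $\beta(e_i) \in \bbZ[\xi]$ identifies set (1) with $\bbZ[\xi]\omega_1 + \cdots + \bbZ[\xi]\omega_n$. The equality (2) = (3) follows from the observation that an $\bbR$-linear form $\alpha:\bbR^n \to \bbR$ satisfies $\alpha(\bbZ^n) \subset \bbZ$ if and only if $\alpha(e_i) \in \bbZ$, so restriction to $\bbZ^n$ gives a bijection onto $Hom_\bbZ(\bbZ^n,\bbZ)$, and this bijection commutes with evaluation at $v_M$ (when $\xi$ is non-real one interprets the extension as the $\bbC$-linear extension).

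The only step carrying actual content — and hence the main, mild obstacle — is showing that $\bbZ\omega_1 + \cdots + \bbZ\omega_n$ equals $\bbZ[\xi]\omega_1 + \cdots + \bbZ[\xi]\omega_n$. The inclusion $\subset$ is automatic. For the reverse I would invoke the eigenvalue relation: since $Mv_M = \xi v_M$ with $M \in \calM_n(\bbZ)$, one has $\xi\omega_i = \sum_j M_{ij}\omega_j \in \bbZ\omega_1 + \cdots + \bbZ\omega_n$, and by induction $\xi^k\omega_i$ lies in this $\bbZ$-module for every $k \geq 0$. Hence $\bbZ\omega_1 + \cdots + \bbZ\omega_n$ is already stable under multiplication by $\bbZ[\xi]$, so it coincides with the $\bbZ[\xi]$-module, and the three sets all agree. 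The only thing to keep track of throughout is the distinction between the three ambient scalar rings and their corresponding notions of linearity; once this bookkeeping is fixed, the proof is a direct unwinding of definitions.
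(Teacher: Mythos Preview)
Your proof is correct and follows essentially the same approach as the paper. Both arguments reduce to the identity $\xi\omega_i = \sum_j M_{ij}\omega_j$ coming from $Mv_M = \xi v_M$; the paper phrases this as $\xi v_i = (M^\top e_i, v_M)$ with $M^\top e_i$ integral, while you state it directly and make the induction on powers of $\xi$ explicit, but the content is the same.
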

     \begin{proof}
     The  equality between the two right hand sides is clear. In the first equality, the reverse inclusion is clear. To show the forward inclusion, let  $v_M=(v_1, \dots, v_n)$.  It suffices to show that for each $i$, $v_i$ and $\xi v_i$ are in $\{\alpha(v_M), \alpha \in Hom_{\bbR}(\bbR^n, \bbR), \alpha(\bbZ^n)\subset \bbZ  \}$. Of course $v_i$ is the scalar product $v_i = (e_i, v_M)$, and $\xi v_i$ is $v_i = (M^\top e_i, v_M)$. Both linear forms  $(e_i,  \cdot )$ and $(M^\top e_i, \cdot)$ send $\bbZ^n$ to $\bbZ$.
     \end{proof}

    With Lemma \ref{lem;homs} and Proposition \ref{prop;LMD_hom} one recognizes the following geometric interpretation, with notations as above.
    \begin{coro}
     The correspondence between $\calC_\chi$ and  the ideal class monoid of $\bbZ[\xi]$ is given by taking an eigenvector $v_M\in \bbZ[\xi]^n$ and  the  ideal of all intersection numbers of $v_M$ with weighted codimension 1 subtori of $T$ ---  or in other words, the ideal of all evaluations of  the transverse-measured foliation $\calF_{v_M}$ on  codimension 1 subtori. 
    \end{coro}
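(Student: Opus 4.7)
The plan is to derive the corollary as a direct unpacking of Proposition \ref{prop;LMD_hom} and Lemma \ref{lem;homs} in the geometric vocabulary introduced in this subsection. No new content is required; the work is purely to match up the two descriptions of the ideal $\fraka_{v_M}$.

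First I would recall that, by Proposition \ref{prop;LMD_hom}, the assignment $[M] \mapsto [\fraka_{v_M}]$ is a well-defined bijection between $\calC_\chi$ and the ideal class monoid of $\bbZ[\xi]$, and that by Lemma \ref{lem;homs} the ideal $\fraka_{v_M}$ equals $\{\gamma'(v_M) : \gamma \in \Hom_\bbZ(\bbZ^n,\bbZ)\}$. Thus every statement about $\fraka_{v_M}$ reduces to a statement about the values $\gamma'(v_M)$ for integer-valued linear forms $\gamma$, and it suffices to recognize such values as intersection numbers.

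Next I would set up the geometric dictionary. To each nontrivial $\gamma \in \Hom_\bbZ(\bbZ^n,\bbZ)$ the subsection associates a weighted codimension-$1$ subtorus $T_\gamma \in \calH$ (with weight the positive generator of $\gamma(\bbZ^n)$), and the intersection form is defined by $\iota(T_\gamma, v_M) = |\gamma'(v_M)|$. The map $\gamma \mapsto T_\gamma$ is surjective onto $\calH$, and $T_\gamma$ recovers $\gamma$ up to sign. Since any ideal is closed under negation and contains $0$ (the contribution of the trivial form), one obtains
\[
    \fraka_{v_M} \;=\; \{\gamma'(v_M) : \gamma \in \Hom_\bbZ(\bbZ^n,\bbZ)\} \;=\; \bbZ \cdot \{\iota(T, v_M) : T \in \calH\},
\]
which is the first description in the corollary.

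Finally I would invoke the foliation interpretation established at the end of the subsection: for $T_\alpha$ of weight $1$ the transverse measure associated to $v_M$ satisfies $\mu(T_\alpha) = \iota(T_\alpha, v_M)$, and more generally the intersection number coincides with the evaluation of the (projective) transverse measure of $\calF_{v_M}$ on weighted codimension-$1$ subtori. Substituting this into the expression above yields the second formulation. The only mild subtlety --- the one place where a reader might pause --- is that $\iota$ returns a nonnegative integer while $\gamma'(v_M)$ can have either sign; this causes no loss because the equivalence class of an ideal is insensitive to sign changes on its generators. Beyond that bookkeeping point, the corollary is essentially a translation of prior results and presents no real obstacle.
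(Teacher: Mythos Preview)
Your proposal is correct and matches the paper's own approach exactly: the paper simply states that the corollary follows by combining Proposition~\ref{prop;LMD_hom} and Lemma~\ref{lem;homs} with the definitions of $\iota$ and $\calF_{v_M}$, and you have spelled out precisely this translation. The only quibble is that $\iota(T_\gamma,v_M)=|\gamma'(v_M)|$ lies in $\bbZ[\xi]$ rather than in $\bbN$ (since $v_M\in\bbZ[\xi]^n$), so ``nonnegative integer'' should read ``element of $\bbZ[\xi]$ up to sign''; your sign-bookkeeping remark already handles this.
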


     \subsection{A view to laminations: higher genus surfaces}
     
     In the case of a closed orientable surface $\Sigma$, of genus $g\geq 2$, and  an automorphism of $\pi_1(\Sigma)$, one may define similar objects. For a pseudo-Anosov map $\phi$, the eigenvector $v_M$ (and its associated foliation) should be replaced by a geodesic  lamination $\lambda$ on the surface that is preserved by $\phi$, with a transverse measure $\tau$. ( Recall that $\tau$ is a measure on the arcs transverse to $\lambda$, such that $\tau$ is $0$ if the arc does not intersect $\lambda$.) Further, $\tau$  is multiplied by some characteristic value $\xi$ by the action of $\phi$. This transverse measure, when integrated on simple closed curves, provides an intersection number for the measured lamination.
     
     Consider a train track chart $\calT$ carrying $\lambda$. To describe the transverse measure $\tau$ it suffices to assign weights to each of the arcs of the train track, in such a way that at each node, a summation consistency holds.  The mapping class $\phi$ determines a map from 
     the underlying graph of $\calT$ to itself, and a matrix $M_{\phi,\calT}$ counting, in its columns,  the number of occurrences of each arc of $\calT$ in the image of each given arc of $\calT$. The vector $W$ of weights of each arc must satisfy $M_{\phi, \calT} W= \xi W$. Choosing $W$ in $\bbZ[\xi]^n$ thus allows us to define an ideal $\fraka_{M_{\phi, \calT}}$, as generated by the image of $W$ under the collection of intersection forms. 
     
    Since the intersection forms depend only on $\lambda$, and not $\calT$, we defined an ideal class of $\bbZ[\xi]$ associated to $\phi$. 
      If $\psi$ is conjugate to $\phi$ in the mapping class group of $\Sigma$, then its associated lamination is the image of that of $\phi$ by a conjugating mapping class, and therefore, they define the same ideal class.
      If $\phi$ and $\psi$ are commensurably conjugate, the stretch factor $\xi$ is the same for both, as easily seen in a finite cover of $\Sigma$.
    
   Completing a  picture analogous to the torus case would require understanding which ideal classes are realized, and understanding pseudo-Anosov  mapping classes defining the same ideal class, in a given mapping class group.

{\small

}

\end{document}